\newcommand{\polish} {{\mathbf{Pol}}}
\newcommand{\xcl} {{\cal L}}
 \newcommand{\xci} {{\cal I}}
\newcommand{\txllc}{\mbox{\scriptsize $\bot$}}
\newcommand{\txllf}[2]{\langle #1 , #2 \rangle}
\newcommand{\txllr}{\sqsubseteq}
\begin{document}             

\newcommand{\ccom}[1]{\mbox{\footnotesize{#1}}}

\setlength{\parindent}{0mm}
 \setlength{\parskip}{4pt}

\title{Notes on Interpretability between Weak First-order Theories: Theories of Sequences}

\author{Lars Kristiansen\inst{1,2}\\Juvenal   Murwanashyaka\inst{3}}

\authorrunning{Kristiansen and  Murwanashyaka}

\titlerunning{Notes on Interpretability \ldots}
 \institute{
Department of Mathematics, University of Oslo, Norway
\and
Department of Informatics, University of Oslo, Norway 
\and
Institute of Mathematics of the Czech Academy of Sciences, Prague, Czech Republic\\
		\email{larsk@math.uio.no} \email{murwanashyaka@math.cas.cz}
}

 \maketitle                   

\newcommand{\xd}{\texttt{D}}
\newcommand{\integer}{\ensuremath{\mathbb Z}}
\newcommand{\rational}{\ensuremath{\mathbb Q}}
\newcommand{\nat}{\ensuremath{\mathbb N}}
\newcommand{\real}{\ensuremath{\mathbb R}}
\newcommand{\kleeneT}{{\mathcal T}}

\newcommand{\kleeneU}{{\mathcal U}}

\newcommand{\llc}{e}
\newcommand{\tzero}{\dot{\mathrm{0}}}

\newcommand{\llf}[2]{#1  \! \vdash \!\!   #2 }
\newcommand{\llapp}[2]{#1 \circ #2 }

\newcommand{\tsuc}[1]{\dot{\mathrm{S}} #1 }
\newcommand{\llr}{\sqsubseteq}

\newcommand{\fintnavn}{\mathsf{Seq}} 
\newcommand{\wfintnavn}{\mathsf{WSeq}} 
\newcommand{\folast}{\mathsf{AST}}
 \newcommand{\folastext}{\mathsf{AST + EXT}}
\makeatletter
\newcommand{\leqnomode}{\tagsleft@true}
\newcommand{\reqnomode}{\tagsleft@false}
\makeatother

\begin{abstract}
We introduce a first-order theory $\fintnavn$ which is mutually interpretable with Robinson's $\mathsf{Q}$.
The universe of a standard model for $\fintnavn$ consists of sequences.
We prove that $\fintnavn$ directly interprets the adjuctive set theory $\mathsf{AST}$, and we prove that $\fintnavn$  interprets 
the tree theory $\mathsf{T}$ and the set theory $\mathsf{AST + EXT}$. 
\end{abstract}

\section{Introduction}

First order theories like Robinson's $\mathsf{Q}$, Grzegorczyk's $\mathsf{TC}$ and adjunctive set theory 
$\mathsf{AST}$ serve as important metamathematical tools. These are natural theories given by a handful of transparent axioms. They are all mutually interpretable with each other and also with a number of other natural theories, e.g., the 
tree theories studied  in Kristiansen \& Murwanashyaka \cite{treteori}, 
Damnjanovic \cite{dam} \cite{damto} and Murwanashyaka \cite{jtrees}, and 
the concatenation theories studied in  Murwanashyaka \cite{juvenalaml}.

At a first glance all these theories might seem very weak, and they are indeed weak, but it turns out that  they provide
 the building blocks needed to construct (interpret, encode, formalise, emulate, reconstruct, \ldots pick your choice) substantial mathematics.\footnote{
 See Ferreira \&  Ferreira \cite{ff} for more on how mathematics can be constructed in these theories.}
 This (maybe somewhat unexpected) strength 
stems  from the theories'   ability to represent  sequences. Intuitively, access to sequences seems to be
 both necessary and
sufficient in order to construct substantial parts of mathematics. Now, it is rather tricky to deal with sequences in weak number theory ($\mathsf{Q}$), see Nelson \cite{nelson} and Chapter V of Hajek \& Pudlak \cite{hp};
neither is it all that straightforward in concatenation theory ($\mathsf{TC}$) as we encounter the growing comma 
problem, see Quine \cite{quine}  and Visser \cite{growing}. It might be easier in weak set theory ($\mathsf{AST}$), but still it is far from trivial.

In this paper we introduce $\fintnavn$ which is designed to be a minimal first-order theory mutually interpretable with 
$\mathsf{Q}$ (it is minimal in the sense that it will not interpret 
$\mathsf{Q}$ if we remove any of its five axioms). Moreover, it is designed to provide, as directly as possible, the means needed to represent sequences.
This implies that $\fintnavn$ in some respects may be a more natural starting point for a construction of mathematics than the theories discussed above. $\fintnavn$ provides directly the basic building blocks we need to represent sequences.
In the other theories we need to put effort into building these basic building blocks. Of course, when we work in 
$\fintnavn$, we have to represent, let us say, natural numbers by sequences, but it is both easier and more natural to
represent natural numbers by sequences than it is to represent sequences by natural numbers.

We do not claim that $\fintnavn$ is more natural than other theories in any absolute sense. A theory will of course
have its own 
distinctive features, and e.g., $\mathsf{Q}$ is  a very natural starting point for Nelson's investigations into predicative
arithmetic \cite{nelson}, and $\mathsf{TC}$ is a very natural theory from Grzegorczyk's \cite{sl} \cite{gz} point of view since he wants to ``formulate
the proof of undecidability not on the grounds of arithmetic but directly on a theory of text'' \cite{sl}. 
We just claim that $\fintnavn$ is a natural theory from a certain point of view, and it is not on our agenda 
to discuss if this naturalness is a matter of taste or a matter of deeper (philosophical or mathematical) nature.

\paragraph{An overview of the paper.}
 In Section \ref{savnermor} we introduce the theory $\fintnavn$, together with a weaker variant
called $\wfintnavn$, and prove that the two theories have a certain strength.
In Section \ref{savnernoe} we show that $\fintnavn$ directly interprets $\mathsf{AST}$, and thus, $\fintnavn$
is a sequential theory (a theory is {\em sequential} by definition if it directly interprets $\mathsf{AST}$).
In Section \ref{gulspisser} we develop a basic technical machinery which  will be needed in Section \ref{gronnspisser}
where we prove that $\fintnavn$ interprets the tree theory $\mathsf{T}$. 
Finally, in Section \ref{blankspisser}, we build on the work in Section \ref{gronnspisser} and prove that $\fintnavn$ interprets
 $\mathsf{AST + EXT}$.

The techniques used in Section \ref{gronnspisser} and Section \ref{blankspisser} might be of some independent interest:
We represent finite binary trees and hereditarily finite sets by certain sequence of natural numbers which we will
call {\em snakes}. 

It should be pretty clear that the theorem below can be proved
by using  the techniques described in   Chapter V of Hajek \& Pudlak \cite{hp}. Thus, we state the theorem with no further
comment and  offer no proof.

\begin{theorem} \label{brillestell} 
$\mathsf{Q}$ interprets $\fintnavn$.
\end{theorem}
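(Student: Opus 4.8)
The plan is to interpret $\fintnavn$ in $\mathsf{Q}$ by coding finite sequences as natural numbers. The fundamental difficulty is that $\mathsf{Q}$ proves no induction, so the usual machinery of sequence coding --- the G\"odel $\beta$-function, or any pairing-based scheme --- is neither provably total nor provably well-behaved on the full domain of an arbitrary model of $\mathsf{Q}$, whose elements may be wildly nonstandard. The standard remedy, and the one indicated by the reference to Chapter~V of \cite{hp}, is to retreat first to a well-behaved definable cut. Thus the first step is to use the Solovay technique of shortening cuts to define, inside $\mathsf{Q}$, an initial segment $I(x)$ that is provably closed under zero, successor, addition and multiplication and on which a sufficiently strong fragment of bounded arithmetic (for instance $I\Delta_0$, or a mild extension of it) holds. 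On such a cut the elementary theory of sequence coding becomes available: one has a $\Delta_0$ predicate expressing ``$s$ codes a sequence'', together with length, decoding and append operations that are provably total on $I$ and satisfy their defining recursion equations there.

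The second step is to read off the interpretation. Its domain is the class of codes of sequences relativised to the cut, and each primitive symbol of $\fintnavn$ is interpreted by the matching $\Delta_0$-definable coding operation: the constant $\tzero$ by the code of the empty (or base) sequence, the generating operation $\tsuc{\ }$ and the concatenation $\llapp{\ }{\ }$ by the corresponding operations on codes, and the relation $\llr$ by the $\Delta_0$-definable initial-segment (subsequence) relation. Since all the defining formulas are $\Delta_0$ and everything is relativised to $I$, this yields a bona fide relative interpretation.

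The third step is to verify that the translation of each of the five axioms of $\fintnavn$ is a theorem of $\mathsf{Q}$. Under the coding, each axiom becomes a statement of low arithmetical complexity about codes --- an instance of a defining equation for concatenation, a uniqueness-of-decoding (injectivity) fact, or an elementary property of the order --- and each such statement is provable in the fragment that holds on $I$. Hence its relativisation to $I$ is provable in $\mathsf{Q}$.

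The hard part is concentrated entirely in the first step: engineering the cut so that the coding operations are provably total and satisfy their recursion equations, all without any induction on the ambient model. Once a cut validating enough bounded arithmetic is in hand, the choice of coding and the verification of the finitely many, quantifier-simple axioms of $\fintnavn$ are routine bookkeeping --- which is precisely why it is reasonable to defer to the methods of \cite{hp} and suppress the calculation.
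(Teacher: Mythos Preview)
Your approach is essentially what the paper intends: the paper offers no proof at all and simply defers to the cut-construction and sequence-coding machinery of Chapter~V of \cite{hp}, which is precisely what you sketch. One cosmetic slip: the language $\xcl$ of $\fintnavn$ has only the constant $e$, the binary function $\llf{\,}{\,}$, and the binary function $\circ$; there is no primitive relation $\llr$ to interpret (it is defined as $\exists y[\,x\circ y = t\,]$), and the symbols $\tzero$, $\tsuc{\,}$ you mention are not part of $\xcl$.
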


\paragraph{Discussion of some related work.} Our work should be compared to some work of Damnjanovic.
Prior to this work of Damnjanovic, it was known that $\mathsf{AST}$ interprets $\mathsf{Q}$.

In \cite{slatanonast}, Damnjanovic 
introduces the concatenation theory
$\mathsf{QT}$ and proves that $\mathsf{Q}$ interprets $\mathsf{QT}$ and that $\mathsf{QT}$ interprets 
$\mathsf{AST + EXT}$. 
Thus, since $\mathsf{AST}$ interprets $\mathsf{Q}$, it follows from the results in \cite{slatanonast} that 
$\mathsf{AST}$  interprets  $\mathsf{AST+ EXT}$. By the same token, just replace $\mathsf{QT}$ with $\fintnavn$,
 it follows from our results that
$\mathsf{AST}$  interprets  $\mathsf{AST+ EXT}$.

Kristiansen \& Murvanashyaka \cite{treteori} introduces the tree theory $\mathsf{T}$. They prove that $\mathsf{T}$
interprets $\mathsf{Q}$ and conjecture that  $\mathsf{Q}$ interprets $\mathsf{T}$.  Damnjanovic \cite{dam}
proves that $\mathsf{Q}$ indeed interprets $\mathsf{T}$ by proving that $\mathsf{QT}$  interprets $\mathsf{T}$.
We prove that $\fintnavn$  interprets $\mathsf{T}$, and thus it also follows the results in this paper that 
$\mathsf{Q}$ interprets $\mathsf{T}$. 

These considerations show that there are certain similarities between our work and the work of Damnjanovic:
when Damnjanovic resorts to $\mathsf{QT}$, we resort to $\fintnavn$. To the authors, these similarities have become
apparent in hindsight, and they may reflect that $\mathsf{QT}$ and $\fintnavn$ share some salient  features.

\paragraph{Some considerations on text, alphabet strings and sequences.} 
Concatenation theory is a theory about strings over an alphabet, and occasionally  
concatenation theory is considered as a mathematical model of text, e.g., in  
Grzegorczyk's \cite{sl}. We can also consider the elementary theory of sequences which we introduce in this paper
as a mathematical model of text, and perhaps even a more adequate model than concatenation theory. 

A text 
will be written in a language, and every language has 
an inherent hierarchical structure. Such a structure can be fairly well modelled 
by a context-free grammar. This is true of formal languages used to describe, e.g.,  a deduction in a proof calculus
or the normalisation of a $\lambda$-term; this is true of semi-formal languages
like the ones used to express formulas and equations in mathematical textbooks;
and this is also more or less true of natural languages like Norwegian and Kinyarwanda.
The universe of the standard structure for $\fintnavn$ consists of sequences with a hierarchical structure, indeed
the sequences in the universe can be generated by the archetypical context-free grammar
$S \rightarrow ()$;  $S \rightarrow (S)$; $S \rightarrow SS$.
Such sequences may be more suited to modelling the structure of a language, and thus the structure of text,
than the flat strings of concatenation theory.

\section{The theories $\fintnavn$ and $\wfintnavn$}

\label{savnermor}

Let $\xcl$ be the first order language $\{e , \ \llf{ }{ } \ , \circ \}$ where  $\llf{ }{ }$ and $\circ$ are binary
function symbols and $e$ is a constant symbol. The   $\xcl$-theory $\fintnavn$  is given by the five  axioms in Figure \ref{fintnavnaksiomer}.

\begin{figure}[t] 
$$\begin{array}{lll}
 \fintnavn_1 & \; & \forall x y [  \  \llf{x}{y} \neq \llc   \   ]  \\
 \fintnavn_2 & \; &  \forall x_1 x_2 y_1 y_2 [  \  \llf{x_1}{x_2} = \llf{y_1}{y_2} \; \rightarrow \; (  \  x_1 =y_1  \, \wedge \, x_2 = y_2 \  )       \  ]  \\
 \fintnavn_3   & \; & \forall x  [  \  \llapp{x}{\llc} =  x \   ] \\
 \fintnavn_4   & \; & \forall x y z   [  \  \llapp{x}{(\llf{y}{z})}   = \llf{(\llapp{z}{y})}{z} \     ] \\
 \fintnavn_5  & \; & \forall x    [  \  x= \llc \; \vee \; \exists y z [ \    x= \llf{y}{z}      \ ] \     ]
\end{array}
$$
\caption{The non-logical axioms of $\fintnavn$.
\label{fintnavnaksiomer}}
\end{figure}

Next we define the set of sequences inductively: the empty sequence $()$ is a sequence.
For any $n>0$,  if $s_1$, $s_2$, \ldots and $s_n$ are sequences, then $(s_1, s_2, \ldots, s_n)$
is a sequence.

We define the standard model $\mathfrak{S}$ for the theory $\fintnavn$. The universe of 
$\mathfrak{S}$ is the set of all sequences (as defined above). Furhermore, $e^\mathfrak{S}= ()$;
the operator $\vdash^\mathfrak{S}$ appends an element to a sequence, that is
$$
(s_1, s_2, \ldots, s_n)\vdash^\mathfrak{S} (t_1, t_2, \ldots, t_m) \; = \; (s_1, s_2, \ldots, s_n,(t_1, t_2, \ldots, t_m))
$$
for any sequence $(s_1, s_2, \ldots, s_n)$ and any sequence $(t_1, t_2, \ldots, t_m)$; the operator $\circ^\mathfrak{S}$ 
concatenates two sequences, that is
$$
(s_1, s_2, \ldots, s_n)\circ^\mathfrak{S} (t_1, t_2, \ldots, t_m) \; = \; 
(s_1, s_2, \ldots, s_n,t_1, t_2, \ldots, t_m)
$$
for any sequence $(s_1, s_2, \ldots, s_n)$ and any sequence $(t_1, t_2, \ldots, t_m)$.

For any sequence $s$, we define the {\em sequeral} $\overline{s}$ by
$$
\overline{()}=e \;\;\; \mbox{ and } \;\;\; \overline{(s_1,  \ldots, s_n)} =
\llf{(\ldots ( \llf{(\llf{e}{\overline{s_1}})}{\overline{s_2}})\ldots )}{\overline{s_n}} \; .
$$
The sequerals serve as canonical names for the elements in the universe of the  standard model.

For any $\xcl$-terms $t_1,t_2$ the formula $t_1 \llr t_2$ is shorthand for $\exists y [t_1 \circ y = t_2]$,
and the formula $t_1 \not\llr t_2$ is shorthand for  $\neg t_1 \llr t_2$.
  The formula $\forall x\llr t [\phi]$ is shorthand for $\forall x[x\llr t \rightarrow \phi]$.
We define the $\Sigma$-formulas inductively by
\begin{itemize}
\item  $\phi$ and $\neg \phi$ are  $\Sigma$-formulas if $\phi$ is an atomic $\xcl$-formula
\item $ s  \llr t  $ and $ s \not\llr t $ are    $\Sigma$-formulas if $s$ and $t$ are $\xcl$-terms
\item $\alpha \wedge \beta$ and $\alpha \vee \beta$ are $\Sigma$-formulas
 if $\alpha$ and $\beta$ are $\Sigma$-formulas
\item $\exists x [\phi]$ is a $\Sigma$-formula if $\phi$ is a $\Sigma$-formula
\item $\forall x\llr t [\phi]$ is a $\Sigma$-formula if $\phi$ is a $\Sigma$-formula, $x$ is a variable 
 and $t$ is $\xcl$-term that not contains $x$.
\end{itemize}

The first-order theory $\wfintnavn$ is given by the axioms in Figure \ref{wfintnavnaksiomer}.

\begin{figure}[t] 
The non-logical axioms of  $\wfintnavn$  are given by the three  axioms schemes:
\begin{align}
 \overline{s}\neq \overline{t} \tag{$\wfintnavn_1$}
\end{align}
where $s$ and $t$ er distinct sequences.
\begin{align}
\overline{(s_1,  \ldots, s_n)}\circ \overline{(t_1,  \ldots, t_m)} \; = \; 
\overline{(s_1,  \ldots, s_n,t_1,  \ldots, t_m)}
  \tag{$\wfintnavn_2$}
\end{align}
for any sequences $(s_1,  \ldots, s_n)$ and  $(t_1,  \ldots, t_m)$.
\begin{align}
 \forall x  [  \  x \sqsubseteq  \overline{s} \;\; \rightarrow \;\; \bigvee_{ t\in \xci(s) } x = \overline{t}  \ ]\tag{$\wfintnavn_3$}
\end{align}
where  $s$ is a sequence and $\xci(s)$ is the set of all initial segments of $s$.
\caption{The non-logical axioms of $\wfintnavn$.
\label{wfintnavnaksiomer}}
\end{figure}

\begin{lemma} \label{abracadabra} 
Let $t$ be  any variable-free $\xcl$-term $t$, and let $s$ be the sequence that is the interpretation of $t$ 
in the structure $\mathfrak{S}$. Then we have  
$\wfintnavn_2 \vdash t = \overline{s}$.
\end{lemma}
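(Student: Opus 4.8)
The plan is to argue by structural induction on the variable-free term $t$. A closed $\xcl$-term is built from the constant $e$ by the two binary operations $\llf{}{}$ and $\llapp{}{}$, so there are exactly three cases, and in each the sequence $s$ is determined as the value of $t$ under $e^\mathfrak{S}$, $\vdash^\mathfrak{S}$ and $\circ^\mathfrak{S}$. For the base case $t = e$ we have $s = e^\mathfrak{S} = ()$, and by the definition of the sequeral $\overline{s} = \overline{()} = e$; thus $t$ and $\overline{s}$ are literally the same term, so $t = \overline{s}$ is provable by pure logic (reflexivity of equality) and a fortiori from $\wfintnavn_2$.

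For the append step, suppose $t = \llf{t_1}{t_2}$ and let $s_1, s_2$ be the values of $t_1, t_2$ in $\mathfrak{S}$, so that $s = s_1 \vdash^\mathfrak{S} s_2$. Writing $s_1 = (a_1, \ldots, a_n)$ with $n \geq 0$ allowed, the operation $\vdash^\mathfrak{S}$ appends $s_2$ as a new final entry, giving $s = (a_1, \ldots, a_n, s_2)$. Reading off the definition of the sequeral, $\overline{s}$ is precisely the term $\llf{\overline{s_1}}{\overline{s_2}}$, and this is a syntactic identity rather than an equation requiring an axiom. Applying the induction hypothesis $\wfintnavn_2 \vdash t_i = \overline{s_i}$ for $i = 1, 2$ together with the congruence rules for equality then yields $\wfintnavn_2 \vdash \llf{t_1}{t_2} = \llf{\overline{s_1}}{\overline{s_2}} = \overline{s}$. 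Note that this case consumes no instance of the axiom scheme.

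For the concatenation step, suppose $t = \llapp{t_1}{t_2}$, again with $s_1, s_2$ the values of $t_1, t_2$, so that $s = s_1 \circ^\mathfrak{S} s_2$. Writing $s_1 = (a_1, \ldots, a_n)$ and $s_2 = (b_1, \ldots, b_m)$, concatenation gives $s = (a_1, \ldots, a_n, b_1, \ldots, b_m)$, and the relevant instance of $\wfintnavn_2$ is exactly $\overline{s_1} \circ \overline{s_2} = \overline{s}$. Combining this instance with the induction hypothesis $\wfintnavn_2 \vdash t_i = \overline{s_i}$ and congruence gives $\wfintnavn_2 \vdash \llapp{t_1}{t_2} = \llapp{\overline{s_1}}{\overline{s_2}} = \overline{s}$, completing the induction.

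The argument is essentially bookkeeping, so there is no serious obstacle; the only points that demand care are the degenerate subcases in which one of $s_1, s_2$ is the empty sequence, so that the corresponding sequeral collapses to $e$ — these are absorbed by permitting $n = 0$ or $m = 0$ in the instances above — and the observation that the append step rests on the definition of the sequeral alone, whereas only the concatenation step actually invokes $\wfintnavn_2$. This last observation is what justifies the sharper claim that provability already goes through from the single scheme $\wfintnavn_2$.
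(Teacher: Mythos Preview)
Your proof is correct and follows exactly the approach the paper indicates: structural induction on $t$ with the three cases $t\equiv e$, $t\equiv \llf{t_1}{t_2}$, and $t\equiv t_1\circ t_2$. The paper merely states that the induction is straightforward, while you have spelled out the details, including the useful observation that only the concatenation case actually invokes the scheme $\wfintnavn_2$.
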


\begin{proof}
This lemma is proved by induction on the structure of the term $t$. Thus, the induction cases are $t\equiv e$ and 
$t\equiv  \llf{t_1}{t_2}$ and $t\equiv t_1 \circ t_2$. Everything should work straightforwardly.
\qed
\end{proof}

\begin{theorem}[$\Sigma$-completeness of $\wfintnavn$] 
For any $\Sigma$-sentence $\phi$, we have
$$
\mathfrak{S}\models \phi \;\;\;  \Rightarrow  \;\;\; \wfintnavn \vdash \phi \; .
$$
\end{theorem}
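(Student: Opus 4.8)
The plan is to prove, by induction on the build-up of $\Sigma$-formulas, a statement general enough to survive the quantifier cases. For a $\Sigma$-formula $\phi$ with free variables among $x_1,\dots,x_k$ and sequences $s_1,\dots,s_k$, write $\phi^{\ast}$ for the sentence obtained by replacing each $x_i$ by the sequeral $\overline{s_i}$. I would show: if $\mathfrak{S}\models\phi[s_1,\dots,s_k]$, then $\wfintnavn\vdash\phi^{\ast}$. Taking $k=0$ yields the theorem. Throughout, Lemma \ref{abracadabra} lets me pass freely between a variable-free term $t$ and the sequeral $\overline{s}$ of its value $s$ in $\mathfrak{S}$, so every variable-free term may be treated as a sequeral.

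For the atomic base cases, suppose first that $\phi^{\ast}$ is an equation $t_1=t_2$ with $t_1,t_2$ variable-free. If $\mathfrak{S}\models t_1=t_2$, both sides name the same sequence $s$, so by Lemma \ref{abracadabra} each equals $\overline{s}$ provably and hence $\wfintnavn\vdash t_1=t_2$. If instead $\mathfrak{S}\models t_1\neq t_2$, the two sides name distinct sequences $s_1\neq s_2$, and $\wfintnavn_1$ gives $\overline{s_1}\neq\overline{s_2}$, so $\wfintnavn\vdash t_1\neq t_2$. For $\phi^{\ast}$ of the form $s\llr t$, satisfaction in $\mathfrak{S}$ means the value $\sigma$ of $s$ is an initial segment of the value $\tau$ of $t$, say $\tau=\sigma\circ\rho$; then $\wfintnavn_2$ proves $\overline{\sigma}\circ\overline{\rho}=\overline{\tau}$, which witnesses the existential defining $\llr$, so $\wfintnavn\vdash s\llr t$.

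The propositional cases $\alpha\wedge\beta$ and $\alpha\vee\beta$ and the existential case $\exists x[\phi]$ are routine: for a disjunction I pick the true disjunct and apply the induction hypothesis, and for $\exists x[\phi]$ I take a witnessing sequence $w$, obtain from the induction hypothesis that $\wfintnavn$ proves $\phi^{\ast}$ with $\overline{w}$ substituted for $x$, and introduce the existential quantifier. The two genuinely interesting cases are the negated prefix relation and the bounded universal quantifier, and both are handled by $\wfintnavn_3$. For $s\not\llr t$, suppose $\mathfrak{S}\models s\not\llr t$, so the value $\sigma$ of $s$ is not an initial segment of the value $\tau$ of $t$. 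Reasoning inside $\wfintnavn$: were $s\llr t$ to hold, then since $t=\overline{\tau}$ the instance of $\wfintnavn_3$ for $\tau$ would force $\overline{\sigma}$ to equal $\overline{w}$ for some $w\in\xci(\tau)$; but $\sigma\neq w$ for every such $w$, so $\wfintnavn_1$ refutes each disjunct, a contradiction. Hence $\wfintnavn\vdash s\not\llr t$.

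The hard case, and the one I expect to be the main obstacle, is $\forall x\llr t[\phi]$, that is $\forall x[x\llr t\rightarrow\phi]$ with $t$ not containing $x$. After substitution $t$ is a sequeral $\overline{\tau}$, and $\mathfrak{S}\models\forall x\llr\overline{\tau}[\phi]$ means $\mathfrak{S}\models\phi[w]$ for every $w$ in the \emph{finite} set $\xci(\tau)$ of initial segments of $\tau$; the induction hypothesis then gives, for each such $w$, that $\wfintnavn$ proves $\phi^{\ast}$ with $\overline{w}$ substituted for $x$. Working in $\wfintnavn$, take an arbitrary $x$ with $x\llr\overline{\tau}$; by $\wfintnavn_3$ we have $x=\overline{w}$ for some $w\in\xci(\tau)$, and for that $w$ the substituted instance of $\phi$ is provable, so $\phi$ holds of $x$. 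This establishes $\forall x[x\llr\overline{\tau}\rightarrow\phi]$. The crux throughout is thus $\wfintnavn_3$, which is precisely the ingredient that converts the otherwise infinitary quantifier $\forall x\llr t$ into a finite case distinction over $\xci(\tau)$; I would therefore be careful to state the induction hypothesis for open formulas, so that both this step and the existential step go through cleanly.
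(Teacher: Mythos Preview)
Your proposal is correct and follows essentially the same approach as the paper: induction on the build-up of $\Sigma$-formulas, with Lemma~\ref{abracadabra} handling atomic cases, $\wfintnavn_1$ handling negated equalities, $\wfintnavn_2$ handling $s\llr t$, and $\wfintnavn_3$ reducing both $s\not\llr t$ and $\forall x\llr t[\phi]$ to finite case splits over $\xci(\tau)$. The only difference is cosmetic: you state the induction hypothesis explicitly for open formulas with sequerals substituted for the free variables, whereas the paper inducts over $\Sigma$-sentences directly and silently uses that substituting a closed term into a $\Sigma$-formula yields a structurally simpler $\Sigma$-sentence; your formulation is slightly cleaner but the content is identical.
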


\begin{proof}
Assume $\mathfrak{S}\models \phi$.
We prove $\wfintnavn \vdash \phi$ by induction on the structure of a $\Sigma$-sentence $\phi$.

Let $\phi$ be atomic, that is, let $\phi$ be of the form $t_1 = t_2$. Then our theorem follows by  Lemma \ref{abracadabra}.

When $\phi$ is the negation of an atomic sentence, the theorem follows by Lemma \ref{abracadabra} and $\wfintnavn_1$.

The theorem follows easily by using the induction hypothesis when $\phi$ is of the form $\alpha \wedge \beta$ or of the form
 $\alpha \vee \beta$.

 Let $  \phi $ be of the form $ s  \llr  t $. 
Then, $ \mathfrak{S}\models  t = s r $ for some  variable-free term  $r$. 
By  Lemma \ref{abracadabra}  and $\wfintnavn_2$, we get $ \wfintnavn \vdash t = s r $,   which implies  $ \wfintnavn \vdash  s    \llr  t $.

  Let $  \phi $ be of the form $ s  \not \llr  t $.
  Let $r $ be the unique sequence such that   $\mathfrak{S}\models t = \overline{r}$. 
  Then,  $ \mathfrak{S}\models  \bigwedge_{ w \in \xci(r) }  s \neq  \overline{w}      $  (recall that   $\xci(r)$ is the set of all initial segments of $r$). 
Hence,  $  \wfintnavn \vdash  \bigwedge_{ w \in \xci(r) }     s \neq   \overline{w}     $. 
Thus,   $  \wfintnavn \vdash   s  \not \llr  t $ by Lemma   \ref{abracadabra} and $\wfintnavn_3$.

 Assume $\phi$ is of the form $\exists x [\psi(x)]$. Then there exists a sequence $s$ 
 such that $\mathfrak{S}\models \psi(\overline{s})$. By the induction hypothesis, we have $\wfintnavn\vdash \psi(\overline{s})$,
 and thus we also have $\wfintnavn\vdash \exists x [\psi(x)]$.
 
 Finally, assume that $\phi$ is of the form $\forall x \sqsubseteq  t [\psi(x)]$.  Let $s$ be the  unique 
 sequence such that $\mathfrak{S}\models t=\overline{s}$ and let $s_1,\ldots,s_k$ be the initial segments of $s$.
 Then we have $\mathfrak{S}\models \psi(\overline{s}_i)$ for $i=1,\ldots, k$. By the induction hypothesis, we have
  $\wfintnavn\vdash \psi(\overline{s_i})$, and thus also $\wfintnavn\vdash x=\overline{s_i} \rightarrow \psi(x)$, for $i=1,\ldots, k$.
  Furthemore, by $\wfintnavn_3$, we have $\wfintnavn\vdash x \sqsubseteq  \overline{s} \rightarrow \psi(x)$, and by
Lemma \ref{abracadabra}, we get $\wfintnavn\vdash x \sqsubseteq  t \rightarrow \psi(x)$. Thus, we have 
$\wfintnavn\vdash \forall x\llr t [\psi(x)]$ since $\forall x \sqsubseteq  t [\psi(x)]$ simply is  an abbreviation for 
$\forall x[ x \sqsubseteq  t \rightarrow \psi(x)]$.
\qed
\end{proof}

We conjecture  that $\wfintnavn$ is mutually interpretable with Robinson's $\mathsf{R}$.

\begin{lemma} \label{kongharald}
 For any sequence $s$, we have 
$$\fintnavn  \vdash  \forall x  \;  [   \    x  \sqsubseteq    \overline{s} \;\; \leftrightarrow  \;\;   \bigvee_{ t \in  \xci(s)  }   x = \overline{t}   \    ]  $$
where $ \xci(s)  $ is the set of all  initial segments of $s$. 
\end{lemma}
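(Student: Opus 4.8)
The plan is to prove the lemma by induction on the length $n$ of the sequence $s$, exploiting the recursive shape of sequerals. If $s = (s_1, \ldots, s_n, s_{n+1})$ and $s^- = (s_1, \ldots, s_n)$ denotes its penultimate initial segment, then the definition of the sequeral gives $\overline{s} = \llf{\overline{s^-}}{\overline{s_{n+1}}}$, while the set of initial segments satisfies $\xci(s) = \xci(s^-) \cup \{s\}$. Thus the whole inductive step reduces to the single \emph{key claim}
$$\fintnavn \vdash \forall x [x \llr \overline{s} \leftrightarrow (x \llr \overline{s^-} \vee x = \overline{s})],$$
from which the full biconditional follows by rewriting the disjunct $x \llr \overline{s^-}$ with the induction hypothesis $x \llr \overline{s^-} \leftrightarrow \bigvee_{t \in \xci(s^-)} x = \overline{t}$ and using $\xci(s) = \xci(s^-) \cup \{s\}$.

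For the base case $s = ()$ we have $\overline{s} = \llc$ and $\xci(()) = \{()\}$, so I must show $\fintnavn \vdash \forall x[x \llr \llc \leftrightarrow x = \llc]$. Unfolding $x \llr \llc$ as $\exists y[\llapp{x}{y} = \llc]$, the right-to-left direction is immediate from $\fintnavn_3$ (take the witness $y \equiv \llc$). For the other direction I would take any witness $y$ with $\llapp{x}{y} = \llc$ and case-split on $y$ using $\fintnavn_5$: if $y = \llc$ then $\fintnavn_3$ gives $x = \llc$; if $y = \llf{u}{v}$ then $\fintnavn_4$ rewrites $\llapp{x}{(\llf{u}{v})}$ as $\llf{(\llapp{x}{u})}{v}$, so $\llf{(\llapp{x}{u})}{v} = \llc$ contradicts $\fintnavn_1$, ruling the case out.

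The inductive step rests on the key claim, where the work concentrates. Writing $\overline{s} = \llf{\overline{s^-}}{\overline{s_{n+1}}}$, the right-to-left direction is routine: $x = \overline{s}$ yields $x \llr \overline{s}$ via $\fintnavn_3$ with witness $\llc$, while from a witness $u$ for $x \llr \overline{s^-}$ (so $\llapp{x}{u} = \overline{s^-}$) the term $\llf{u}{\overline{s_{n+1}}}$ serves as a witness for $x \llr \overline{s}$, since $\fintnavn_4$ gives $\llapp{x}{(\llf{u}{\overline{s_{n+1}}})} = \llf{(\llapp{x}{u})}{\overline{s_{n+1}}} = \llf{\overline{s^-}}{\overline{s_{n+1}}} = \overline{s}$. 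The left-to-right direction is the crux: given a witness $y$ with $\llapp{x}{y} = \llf{\overline{s^-}}{\overline{s_{n+1}}}$, I again split on $y$ with $\fintnavn_5$. If $y = \llc$, then $\fintnavn_3$ gives $x = \overline{s}$. If $y = \llf{u}{v}$, then $\fintnavn_4$ turns the hypothesis into $\llf{(\llapp{x}{u})}{v} = \llf{\overline{s^-}}{\overline{s_{n+1}}}$, and the injectivity axiom $\fintnavn_2$ extracts $\llapp{x}{u} = \overline{s^-}$, that is, $x \llr \overline{s^-}$.

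The main obstacle, and the only place subtlety enters, will be this last case analysis: the argument hinges on peeling off the last appended element of $\overline{s}$, which is exactly what the interplay of $\fintnavn_5$ (every element is either $\llc$ or an append), $\fintnavn_4$ (concatenation distributes over the append constructor), and $\fintnavn_2$ (injectivity of the append) accomplishes, with $\fintnavn_1$ discarding the degenerate possibilities. No separate induction on terms is needed; the single structural induction on $n$, together with the key claim, suffices.
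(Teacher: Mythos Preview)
Your proposal is correct and follows essentially the same approach as the paper: induction on the length of $s$, with the base case handled by the case split on the witness via $\fintnavn_5$ (using $\fintnavn_3$ for $y=\llc$ and $\fintnavn_4$/$\fintnavn_1$ to rule out $y=\llf{u}{v}$), and the inductive step handled by the same case split together with $\fintnavn_4$ and $\fintnavn_2$ to peel off the last component. The only difference is presentational: you isolate the ``key claim'' $x\llr\overline{s}\leftrightarrow(x\llr\overline{s^-}\vee x=\overline{s})$ explicitly and spell out the right-to-left witnesses, whereas the paper folds this into the main argument and dispatches that direction with a one-line ``clearly''.
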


\begin{proof}
We prove  the lemma by induction on the complexity of the sequence $s$. 

Assume $s = ()$. Then, $  \overline{s} = e $ and $ \bigvee_{ t \in  \xci(s)  }   x = \overline{t} $ will simply be the formula $x=e$.
Thus, we  have to prove that $\fintnavn  \vdash x  \sqsubseteq e \leftrightarrow x=e$.
In order to prove the right-left implication, assume that $x=e$. Then we need to prove that $e \sqsubseteq e$.
This holds since we have $ e \circ e = e $ by $  \fintnavn_3 $. In order to prove the left-right implication
assume that $ x  \sqsubseteq e $. Then   there exists $ w $ such that $ e = x \circ w $. 
By $ \fintnavn_5 $, we can split the proof into the case  (i) $ w = e $ 
and the case  (ii) $ w = \llf{v}{u} $ for some $ v, u $. 
In case  (i), we have $ x = e $ by $ \fintnavn_3$. 
In case (ii) we have a contradiction:  By $ \fintnavn_4 $ we have   
$ e = x \circ (\llf{v}{u}   ) = \llf{(x \circ v)} {u} $ 
  which contradicts $ \fintnavn_1 $. 
Thus, $ x \sqsubseteq  e $ if and only if $ x = e $.

Assume $ s = ( s_1,   \ldots , s_{n+1}  ) $.  Let  $ s^{ \prime } = (  s_1,  \ldots , s_n  ) $. 
Observe that $  \xci (s) =  \xci (s^{  \prime } )  \cup \lbrace s  \rbrace $  and 
$  \overline{s} =  \llf{\overline{ s^{  \prime } } } {   \overline{ s_{ n+1} } } $. 
Clearly, $ t \in  \xci (s) $ implies  $ \overline{t} \sqsubseteq  \overline{s} $, and thus $\bigvee_{ t \in  \xci(s)  }   x = \overline{t}$
implies $ x  \sqsubseteq    \overline{s}$.
In order to prove the left-right implication
 assume that $ x  \sqsubseteq \overline{s} $.  Then  there exists $ w $ such that $  \overline{s} = x \circ w $. 
By $ \fintnavn_5 $, we split the proof into the case (i) $ w = e $ and the case (ii) 
$w= \llf{v}{u}   $ for some $ v, u $. 
In case  (i), we have $ x =  \overline{s} $ by $ \fintnavn_3$. 
In case  (ii),  we have   $ \overline{ s^{  \prime } } = x \circ v $ by  $ \fintnavn_4 $ and  $ \fintnavn_2 $, 
which by the induction hypothesis,   implies $ x =  \overline{t} $ for some $ t \in \xci (s) $.
Thus,  $ x \sqsubseteq  \overline{s} $ if and only if   $ x =  \overline{t} $ for some $ t \in \xci (s) $. 
\qed
\end{proof}

\begin{theorem} \label{kongolav}
The theory $\fintnavn$ is an extension of the theory $\wfintnavn$, that is, we have
$$
\wfintnavn \vdash \phi \;\;\;  \Rightarrow  \;\;\; \fintnavn \vdash \phi \; .
$$
for any $\xcl$-formula $\phi$.
\end{theorem}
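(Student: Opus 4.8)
The plan is to verify that $\fintnavn$ proves every non-logical axiom of $\wfintnavn$. Since $\wfintnavn$ is axiomatised by the three schemes $\wfintnavn_1$, $\wfintnavn_2$, $\wfintnavn_3$, and since first-order provability is monotone in the axiom set, it suffices to show that each instance of each of these three schemes is a theorem of $\fintnavn$; the desired implication $\wfintnavn \vdash \phi \Rightarrow \fintnavn \vdash \phi$ then follows at once.

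The scheme $\wfintnavn_3$ requires no separate work: its instance for a sequence $s$ is exactly the left-to-right direction of the biconditional established in Lemma \ref{kongharald}, which was already derived in $\fintnavn$. For $\wfintnavn_2$ I would argue by induction on $m$, the length of the second sequence. When $m=0$ the right-hand sequeral is $e$, so the claim reduces to $\llapp{\overline{(s_1,\ldots,s_n)}}{e} = \overline{(s_1,\ldots,s_n)}$, an instance of $\fintnavn_3$. For the step I unfold the recursive clause of the sequeral definition to write $\overline{(t_1,\ldots,t_{m+1})}$ as $\llf{\overline{(t_1,\ldots,t_m)}}{\overline{t_{m+1}}}$, apply $\fintnavn_4$ to move the concatenation inside the $\vdash$-constructor, and then invoke the induction hypothesis on the inner concatenation; folding the sequeral definition back up yields the required identity.

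The real work is in $\wfintnavn_1$, for which I would use induction on a combined complexity measure of the pair of distinct sequences $s$ and $t$ (for instance the total number of $\vdash$-symbols occurring in $\overline{s}$ and in $\overline{t}$), chosen so that both the entries of a sequence and its proper prefix count as strictly simpler. If exactly one of $s,t$ is the empty sequence, then one sequeral is $e$ while the other has the form $\llf{\cdot}{\cdot}$, and $\fintnavn_1$ separates them. If both are nonempty, say $s=(s_1,\ldots,s_n)$ and $t=(t_1,\ldots,t_m)$, I assume $\overline{s}=\overline{t}$ towards a contradiction; writing each sequeral as $\llf{\overline{s'}}{\overline{s_n}}$ and $\llf{\overline{t'}}{\overline{t_m}}$ with $s'=(s_1,\ldots,s_{n-1})$ and $t'=(t_1,\ldots,t_{m-1})$, the injectivity axiom $\fintnavn_2$ forces both $\overline{s'}=\overline{t'}$ and $\overline{s_n}=\overline{t_m}$. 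Since $s\neq t$, either the last entries $s_n,t_m$ differ, or else $s_n=t_m$ and the prefixes $s',t'$ must differ; in either case the induction hypothesis applies to a strictly simpler pair and contradicts the corresponding equality. Note that this single case analysis also absorbs the situation $n\neq m$, since unequal lengths with equal final entries again force $s'\neq t'$.

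The main obstacle is essentially bookkeeping rather than mathematical depth: one must fix the well-founded measure so that the recursion into sequence entries — which are themselves sequences — is legitimate, and so that the proper-prefix decomposition used in the inductive step genuinely decreases it. Once the measure is in place, every case collapses to a direct appeal to $\fintnavn_1$ or $\fintnavn_2$ together with the induction hypothesis, and the theorem follows.
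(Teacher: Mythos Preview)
Your proposal is correct and follows essentially the same route as the paper: verify each of the three axiom schemes of $\wfintnavn$ in $\fintnavn$, handling $\wfintnavn_2$ by induction on the length $m$ of the second sequence via $\fintnavn_3$ and $\fintnavn_4$, and handling $\wfintnavn_3$ by Lemma~\ref{kongharald}. The only difference is one of emphasis: the paper dispatches $\wfintnavn_1$ in a single sentence (``use $\fintnavn_1$ and $\fintnavn_2$'') while you spell out the structural induction in detail and call it ``the real work'', but the underlying argument is the same.
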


\begin{proof}
It is easy to see that $\fintnavn$ proves any instance  of the axiom scheme $\wfintnavn_1$ (use  $\fintnavn_1$ and  
$\fintnavn_2$).
We prove that $\fintnavn$ proves any  instance of  $\wfintnavn_2$ by induction on the length of 
the sequence $(t_1, \ldots , t_m)$. Let $(t_1, \ldots , t_m)$ be the empty sequence $()$. Then
$\overline{(t_1, \ldots , t_m)}=e$, and $\wfintnavn_2$ holds by $\fintnavn_3$. Let $m>0$. Then
by $\fintnavn_4$ and the induction hypothesis, we have
\begin{multline*}
\overline{(s_1,  \ldots, s_n)}\circ \overline{(t_1,  \ldots, t_m)}  \; = \; 
\overline{(s_1,  \ldots, s_n)}\circ ( \ \llf{\overline{(t_1,  \ldots, t_{m-1})}}{\overline{t_m}} \ )
      \\ \; = \; 
 \llf{  (    \  \overline{(s_1,  \ldots, s_n)}\circ  \overline{(t_1,  \ldots, t_{m-1})}  \   )  }{\overline{t_m}}
\\ \; = \;   \llf{  \overline{(s_1,  \ldots, s_n, t_1,  \ldots, t_{m-1})}} {\overline{t_m}}      
\; = \;    \overline{(s_1,  \ldots, s_n,t_1,  \ldots, t_m)}\; .
\end{multline*}
Finally,  $\fintnavn$ proves any instance  of  $\wfintnavn_3$  by Lemma \ref{kongharald}.
\qed
  \end{proof}

\begin{corollary}
 The theory $\fintnavn$ is $\Sigma$-complete.
\end{corollary}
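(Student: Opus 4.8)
The plan is simply to chain together the two preceding results. By definition, $\Sigma$-completeness of $\fintnavn$ means that for every $\Sigma$-sentence $\phi$ we have $\mathfrak{S}\models\phi \Rightarrow \fintnavn\vdash\phi$, so this is exactly the implication I must establish.

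First I would fix an arbitrary $\Sigma$-sentence $\phi$ and assume $\mathfrak{S}\models\phi$. Applying the $\Sigma$-completeness theorem for $\wfintnavn$ proved above, I obtain $\wfintnavn\vdash\phi$. Then, invoking Theorem \ref{kongolav}, which states that $\fintnavn$ is an extension of $\wfintnavn$ (i.e.\ $\wfintnavn\vdash\phi \Rightarrow \fintnavn\vdash\phi$), I conclude $\fintnavn\vdash\phi$. Since $\phi$ was an arbitrary $\Sigma$-sentence, this establishes the corollary.

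I expect no genuine obstacle: the statement is an immediate consequence of composing the two theorems, and the only point to be careful about is that the phrase ``$\Sigma$-complete'' is interpreted for $\fintnavn$ in precisely the same way it was for $\wfintnavn$, namely as the soundness-style implication from truth in $\mathfrak{S}$ to provability. Once that reading is fixed, the argument is a one-line transitivity of the two implications and needs no further work.
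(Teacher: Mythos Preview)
Your proposal is correct and matches the paper's intent exactly: the corollary is stated with no proof because it follows immediately by composing the $\Sigma$-completeness of $\wfintnavn$ with Theorem~\ref{kongolav}, precisely as you describe.
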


From now on, we may skip the concatenation operator  in first-order formulas and simply write $st$ in place of $s\circ t$.

\section{$\fintnavn$ is a sequential theory}

\label{savnernoe}

The  theory $\folast$  is given by the two  axioms $\folast_1$ and $\folast_2$ in Figure \ref{astaksiomer}.
We prove that $\fintnavn$  directly interprets $\folast$.

\begin{figure}[t] 
$$\begin{array}{llll}
 \folast_1 & \; & \exists y \forall x [  \  x \not\in y   \   ] & \mbox{(empty set)} \\
 \folast_2 & \; &  \forall x y \exists z \forall u [  \  u\in z \; \leftrightarrow \; (  \  u\in x  \, \vee \, u=y \  )   \  ]  \;\;\;
 & \mbox{(adjunction)} \\
 \folast_3 & \; &  \forall x y  [ \  \forall z [ \  z\in x \, \leftrightarrow \, z\in y \ ]   \; \rightarrow \;  x=y  \  ]  \; .
 & \mbox{(extensionality)} \\
\end{array}
$$
\caption{The non-logical axioms of $\folastext$. The two first axioms are the axioms of   $\folast$.
\label{astaksiomer}}
\end{figure}

We define the direct interpretation 
$\cdot^\tau$.
Let $(x\in y)^\tau = \exists v_1 v_2[(\llf{v_1}{x})v_2=y]$. 
Then, we have
$$
(\folast_1)^\tau \;\; = \;\; \exists y \forall x \neg  \exists v_1 v_2[  \ (\llf{v_1}{x})v_2=y   \   ] 
$$
and
\begin{multline*}
(\folast_2)^\tau \;\; = \;\;\forall x y \exists z \forall u [  \   \exists v_1 v_2[(\llf{v_1}{u})v_2=z]
 \; \leftrightarrow \;  \\ (  \  \exists v_1 v_2[(\llf{v_1}{u})v_2=x]  \, \vee \, u=y \  )       \  ]  \; .
\end{multline*}

\begin{lemma} \label{ladygaga} 
$\fintnavn \vdash (\folast_1)^\tau$.
\end{lemma}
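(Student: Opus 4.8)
The plan is to exhibit an explicit witness $y$ for the empty set under the interpretation and verify it. Recall that $(\folast_1)^\tau$ reads $\exists y \forall x \neg \exists v_1 v_2[(\llf{v_1}{x})v_2 = y]$, so I need to find an element $y$ of the universe for which no $x$ is a member, where membership $x \in y$ means that $y$ contains a block of the form $\llf{v_1}{x}$ possibly followed by more elements $v_2$. The natural candidate is $y = e$, the empty sequence, since intuitively the empty set should be represented by a sequence containing no ``tagged'' elements.

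First I would take $y = e$ and argue by contradiction: suppose some $x$ satisfies $\exists v_1 v_2[(\llf{v_1}{x})v_2 = e]$. Then there are $v_1, v_2$ with $(\llf{v_1}{x}) \circ v_2 = e$. The key is to show this equation is impossible in $\fintnavn$. I would apply $\fintnavn_5$ to $v_2$, splitting into the case $v_2 = e$ and the case $v_2 = \llf{a}{b}$ for some $a,b$. In the first case, $\fintnavn_3$ gives $\llf{v_1}{x} = e$, which directly contradicts $\fintnavn_1$ (which asserts $\llf{v_1}{x} \neq e$). In the second case, $\fintnavn_4$ rewrites $(\llf{v_1}{x}) \circ (\llf{a}{b}) = \llf{((\llf{v_1}{x}) \circ a)}{b}$, so we get $e = \llf{((\llf{v_1}{x}) \circ a)}{b}$, which again contradicts $\fintnavn_1$.

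Either branch yields a contradiction, so no such $x$, $v_1$, $v_2$ exist, establishing $\forall x \neg \exists v_1 v_2[(\llf{v_1}{x})v_2 = e]$ and hence $(\folast_1)^\tau$ with witness $y = e$. This same two-case analysis on the trailing factor $v_2$, reducing a pairing term to $e$ and invoking $\fintnavn_1$, is exactly the pattern already used in the base case of Lemma~\ref{kongharald}, so the argument is routine once the witness is chosen.

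The only real decision is the choice of witness, and $e$ is forced: any nonempty sequence, being of the form $\llf{u}{w}$ by $\fintnavn_5$, would contain a tagged element and thus fail to be empty under $\tau$. I expect no genuine obstacle here; the mild subtlety is simply remembering to handle the concatenation term $v_2$ via $\fintnavn_5$ rather than assuming $v_2 = e$ outright, since membership allows arbitrary material after the tagged block.
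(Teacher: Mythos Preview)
Your proposal is correct and essentially identical to the paper's proof: both take $y=e$ as witness, split on $v_2$ via $\fintnavn_5$, and in each case reduce $(\llf{v_1}{x})v_2$ to a term of the form $\llf{\cdot}{\cdot}$ (using $\fintnavn_3$ or $\fintnavn_4$) before invoking $\fintnavn_1$. The only cosmetic difference is that the paper states the universal claim $\forall x\,v_1\,v_2\,[(\llf{v_1}{x})v_2 \neq e]$ directly rather than framing it as a contradiction.
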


\begin{proof}
We will prove that 
\begin{align} \label{lillefinger}
\fintnavn \vdash \forall x v_1 v_2 [(\llf{v_1}{x})v_2\neq \llc ] \; .
\end{align}
The lemma follows from (\ref{lillefinger}) by pure first-order logic.

Let $v_1, x , v_2$ be arbitrary. By $\fintnavn_5$, we have $v_2= e$ or $v_2 = \llf{w_1}{w_2}$ for some elements $w_1, w_2$.

Assume that $v_2 =e$. By $\fintnavn_3$, we have $(\llf{v_1}{x})v_2 = \llf{v_1}{x}$, and thus, by $\fintnavn_1$, we have 
$(\llf{v_1}{x})v_2 \neq e$.

Assume that $v_2 = \llf{w_1}{w_2}$. By $\fintnavn_4$, we have
$$(\llf{v_1}{x})v_2 = (\llf{v_1}{x} )(\llf{w_1}{w_2}) =  \llf{((\llf{v_1}{x})w_1)} {w_2} $$
and thus, by $\fintnavn_1$, we have 
$(\llf{v_1}{x})v_2 \neq e$.
\qed
\end{proof}

\begin{lemma} \label{eltonjohn} 
$\fintnavn \vdash (\folast_2)^\tau$.
\end{lemma}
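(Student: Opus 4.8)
The plan is to supply an explicit witness for the existential quantifier in $(\folast_2)^\tau$ and then verify the required biconditional clause directly from the five axioms. Given $x$ and $y$, the natural candidate for the adjunction is $z = \llf{x}{y}$: in the standard model $\mathfrak{S}$ the elements of a sequence $s$ in the sense of $(\cdot \in \cdot)^\tau$ are exactly the entries of $s$, so appending $y$ to $x$ yields a sequence whose entries are those of $x$ together with $y$. Hence it suffices to prove, for arbitrary $x,y$,
$$
\fintnavn \vdash \forall u \, [ \; (u \in \llf{x}{y})^\tau \; \leftrightarrow \; ( \, (u \in x)^\tau \vee u = y \, ) \; ] \, ,
$$
after which instantiating $z := \llf{x}{y}$ gives $(\folast_2)^\tau$ by pure first-order logic.

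For the right-to-left direction I fix $u$ and split on the disjunction. If $u = y$, take $v_1 = x$ and $v_2 = e$: by $\fintnavn_3$ we have $(\llf{x}{u})e = \llf{x}{u} = \llf{x}{y}$, which witnesses $(u \in \llf{x}{y})^\tau$. If instead $(u \in x)^\tau$ holds, choose $v_1,v_2$ with $(\llf{v_1}{u})v_2 = x$; then $\fintnavn_4$, applied with $\llapp{(\llf{v_1}{u})}{(\llf{v_2}{y})} = \llf{(\llapp{(\llf{v_1}{u})}{v_2})}{y}$, gives $(\llf{v_1}{u})(\llf{v_2}{y}) = \llf{((\llf{v_1}{u})v_2)}{y} = \llf{x}{y}$, so $v_1$ together with $\llf{v_2}{y}$ witnesses $(u \in \llf{x}{y})^\tau$.

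For the left-to-right direction I assume $(\llf{v_1}{u})v_2 = \llf{x}{y}$ and apply $\fintnavn_5$ to $v_2$. In the case $v_2 = e$, axiom $\fintnavn_3$ collapses the left-hand side to $\llf{v_1}{u} = \llf{x}{y}$, whence $\fintnavn_2$ gives $v_1 = x$ and $u = y$, so in particular $u = y$. In the case $v_2 = \llf{w_1}{w_2}$, axiom $\fintnavn_4$ rewrites the left-hand side as $\llf{((\llf{v_1}{u})w_1)}{w_2} = \llf{x}{y}$; then $\fintnavn_2$ yields $(\llf{v_1}{u})w_1 = x$ and $w_2 = y$, and the first of these equations is precisely a witness, via $v_1$ and $w_1$, for $(u \in x)^\tau$. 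In both cases $(u \in x)^\tau \vee u = y$ holds, which closes the biconditional.

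The argument is almost entirely mechanical once the witness $z = \llf{x}{y}$ is spotted, since everything afterwards is driven by the defining clause of $(\cdot \in \cdot)^\tau$. The one step that demands care is the left-to-right direction: I must use $\fintnavn_5$ to case-split on the tail $v_2$, and then be precise about which subterm is matched to which when invoking the injectivity axiom $\fintnavn_2$, because a misaligned match would quietly break the extraction of the inner witnesses. Beyond this bookkeeping I anticipate no genuine obstacle.
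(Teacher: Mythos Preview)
Your proof is correct and follows essentially the same route as the paper: both arguments instantiate $z := \llf{x}{y}$, prove the two implications separately, and in each direction use exactly the same axioms in the same places (the $\fintnavn_5$ case split on $v_2$, then $\fintnavn_3$/$\fintnavn_4$ to normalise and $\fintnavn_2$ to extract the components). The only cosmetic difference is that the paper states the two implications as separate displayed claims before verifying them.
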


\begin{proof}
We will prove that
\begin{align} \label{ringfinger}
\fintnavn \vdash  \exists v_1 v_2[ \ (\llf{v_1}{u})v_2=\llf{x}{y} \ ]
 \; \rightarrow \;   (  \  \exists v_1 v_2[ \ (\llf{v_1}{u})v_2=x \ ]  \, \vee \, u=y \  ) 
\end{align}
and
\begin{align} \label{langfinger}
\fintnavn \vdash  (  \  \exists v_1 v_2[ \ (\llf{v_1}{u})v_2=x \ ]   \, \vee \, u=y \  )  
 \; \rightarrow \; \exists v_1 v_2[ \ (\llf{v_1}{u})v_2=\llf{x}{y} \ ] \; . 
 \end{align}
 The lemma follows from (\ref{ringfinger}) and (\ref{langfinger}) by pure  logic.
 
 First we prove (\ref{ringfinger}). We assume that $v_1, v_2$ are such that $(\llf{v_1}{u})v_2=\llf{x}{y}$.
 By $\fintnavn_5$, we have $v_2 = e$ or $v_2 = \llf{w_1}{w_2}$ for some $w_1,w_2$.
 
 Assume $v_2=e$. By $\fintnavn_3$, we have $\llf{x}{y}=(\llf{v_1}{u})v_2=\llf{v_1}{u}$, and thus we have $u=y$ by $\fintnavn_2$.
 
 Assume $v_2 = \llf{w_1}{w_2}$. By $\fintnavn_4$ (third equality), we have
 $$
 \llf{x}{y} = (\llf{v_1}{u})v_2 = (\llf{v_1}{u})(\llf{w_1}{w_2}) = \llf{((\llf{v_1}{u})w_1)}{w_2}\; .
 $$
 By $\fintnavn_2$, we have $x=(\llf{v_1}{u})w_1$, and hence, we have $\exists v_1,v_2 [  (\llf{v_1}{u})v_2=x ]$. This proves
 (\ref{ringfinger}).

 We turn to the proof of (\ref{langfinger}). 
 Assume we have $(\llf{w_1}{u})w_2=x$ fore some elements $w_1,w_2$.
 By $\fintnavn_4$, we have $$\llf{x}{y} = \llf{(\llf{w_1}{u})w_2}{y} = (\llf{w_1}{u})(\llf{w_2}{y})$$
 and hence, we have $\exists v_1 v_2[ (\llf{v_1}{u})v_2=\llf{x}{y}]$.
 
 Assume that $u=y$. By $\fintnavn_3$, we have $(\llf{x}{y})e = \llf{x}{y}$, and hence, we have 
 $\exists v_1 v_2[ (\llf{v_1}{u})v_2=\llf{x}{y}]$.
 This proves
 (\ref{langfinger}).
 \qed
\end{proof}

\begin{theorem}
 (i) $\fintnavn$ is a sequential theory. (ii) $\fintnavn$ is essentially undecidable.
\end{theorem}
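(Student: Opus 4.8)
The plan is to derive both parts from the two preceding lemmas together with standard metamathematical facts about $\folast$. For part (i), recall that a theory is sequential precisely when it directly interprets $\folast$, so it suffices to exhibit the direct interpretation and verify that $\fintnavn$ proves the translations of the two axioms $\folast_1$ and $\folast_2$. But this has already been accomplished: the interpretation $\cdot^\tau$ is defined above with $(x \in y)^\tau = \exists v_1 v_2[(\llf{v_1}{x})v_2 = y]$, and Lemma \ref{ladygaga} gives $\fintnavn \vdash (\folast_1)^\tau$ while Lemma \ref{eltonjohn} gives $\fintnavn \vdash (\folast_2)^\tau$. Since $\cdot^\tau$ leaves the universe unrelativised and translates equality as equality (it is a \emph{direct} interpretation in the technical sense), these two lemmas together witness that $\fintnavn$ directly interprets $\folast$, which is exactly the definition of sequentiality. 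So part (i) is essentially immediate from what precedes it.

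For part (ii), the plan is to invoke the known essential undecidability of $\folast$ and transfer it along the interpretation. First I would note that $\folast$ is essentially undecidable; this is a classical fact, following for instance from the mutual interpretability of $\folast$ with Robinson's $\mathsf{Q}$ mentioned in the introduction, and $\mathsf{Q}$ is the paradigmatic essentially undecidable finitely axiomatised theory (Tarski, Mostowski, Robinson). The key structural principle is that essential undecidability is inherited downward along interpretations: if a theory $T$ interprets an essentially undecidable theory $S$, and $T$ is consistent, then $T$ is itself essentially undecidable. Since $\fintnavn$ interprets $\folast$ by part (i), and $\fintnavn$ is consistent (it has the standard model $\mathfrak{S}$), it follows that $\fintnavn$ is essentially undecidable.

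The only genuine subtlety I would flag is making sure the hypotheses of the transfer principle are met cleanly. One needs $\fintnavn$ to be consistent, which is guaranteed by the existence of the standard model $\mathfrak{S}$; one needs the interpretation to be of the right kind (the direct interpretation $\cdot^\tau$ qualifies, as it is a relative interpretation in particular); and one needs $\folast$ itself to be essentially undecidable and recursively axiomatised (it is finitely axiomatised, hence recursive). None of these steps involves new computation, so there is no real obstacle here, only the bookkeeping of citing the appropriate form of the inheritance theorem for essential undecidability under interpretation. In practice I would simply state that essential undecidability is preserved under interpretation into a consistent theory and conclude from Lemmas \ref{ladygaga} and \ref{eltonjohn} together with the essential undecidability of $\folast$.
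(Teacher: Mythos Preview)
Your proposal is correct and follows exactly the paper's approach: invoke Lemmas \ref{ladygaga} and \ref{eltonjohn} to conclude that $\fintnavn$ directly interprets $\folast$, hence is sequential, and then transfer the essential undecidability of $\folast$ along the interpretation. The paper's own proof is just a terser version of what you wrote, omitting the explicit mention of consistency and the inheritance principle.
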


\begin{proof}
It follows from Lemma \ref{ladygaga} and Lemma \ref{eltonjohn} that the theory directly  interprets $\folast$.
Thus, the theory is seqential, moreover, the theory is essentially undecidable since $\folast$ is essentially undecidable.
\qed
\end{proof}

\section{Indexed Sequences}

\label{gulspisser}

\subsection{Terminology and Notation}

A {\em class} is a formula with a free variable. 
Let $ \phi $ be a formula and  let $ K$ be a class. We define   $  \phi^K $ inductively by
 $  \phi^K = \phi $ if $  \phi $ is an atomic formula; 
$ ( \neg  \phi )^K = \neg   \phi^K $;
$ (  \phi  \oslash  \psi )^K =  \phi^K  \oslash   \psi^K $   for $  \oslash  \in  \lbrace  \vee,  \wedge ,  \rightarrow , \leftrightarrow  \rbrace  $;
$$  (   \forall x  \;   \phi  )^K =  \forall   x \;   [   \    K(x) \rightarrow    \phi^K    \   ]     
 \;  \;  \;  \mbox{ and }  \;  \;  \; 
  (   \exists x  \;   \phi  )^K =  \exists   x \;   [   \    K(x)     \;   \wedge   \;     \phi^K    \   ]   \; . $$ 
The formula $  \phi^K $ is called {\em the restriction of $  \phi $ to the class $K$}.

When $K$ is a class,
we may write $x \in K $ in place of  $ K(x) $ and use  standard set-theoretic notation. We
 may also  write $  \forall x \in K    [    \phi     ]  $  and $  \exists  x \in K     [    \phi    ]  $  in  place of
 $   \forall   x     [    x \in K  \rightarrow    \phi      ]    $  
and 
$  \exists   x   [   x \in K        \wedge       \phi      ]    $, 
respectively.

\subsection{A Variant of $ \mathsf{Seq} $  }

Let  $  \mathsf{Seq}^* $ denote the theory we obtain by taking $  \mathsf{Seq} $ and replacing $  \mathsf{Seq}_3 $ and  $  \mathsf{Seq}_5 $  with the axioms
\[
\begin{array}{r c l }
\mathsf{Seq}^*_3  &    &   
\forall x      [     \     xe = x  \;  \wedge  \;   ex = x    \      ]  
\\
\mathsf{Seq}^*_5  &    &   
\forall x y z w       \big[ \  xy = zw   \;\;  \leftrightarrow   \;\;
  \exists u [ \, ( z=xu   \,  \wedge   \,      uw = y )   \,   \vee   \,    (    x = zu     \,  \wedge   \,      uy = w     )   \,   ]     \         \big]\, . 
\end{array}
\]
The right-left implication of $ \mathsf{Seq}^*_5 $  is logically equivalent to associativity of $  \circ $. 
In the context of concatenation theories, the  left-right implication  is called the \emph{editor axiom}  and is attributed to Alfred Tarski.

\begin{figure}[t] 
$$\begin{array}{lll}
 \fintnavn_1 & \; & \forall x y [  \  \llf{x}{y} \neq \llc   \   ]  \\
 \fintnavn_2 & \; &  \forall x_1 x_2 y_1 y_2 [  \  \llf{x_1}{x_2} = \llf{y_1}{y_2} \; \rightarrow \; (  \  x_1 =y_1  \, \wedge \, x_2 = y_2 \  )       \  ]  \\
 \fintnavn^*_3   & \; &   \forall x      [     \     xe = x  \,  \wedge  \,   ex = x    \      ]      \\
 \fintnavn_4   & \; & \forall x y z   [  \  \llapp{x}{(\llf{y}{z})}   = \llf{(\llapp{z}{y})}{z} \     ] \\
 \fintnavn^*_5  & \; & \forall x y z w       \big[ \  xy = zw   \;\;  \leftrightarrow   \;\;
  \exists u [ \, ( z=xu   \,  \wedge   \,      uw = y )   \,   \vee   \,    (    x = zu     \,  \wedge   \,      uy = w     )   \,   ]     \         \big] 
\end{array}
$$
\caption{The non-logical axioms of $\fintnavn^*$.
\label{fintnavnstjerneaksiomer}}
\end{figure}

In this section, we show that  $  \mathsf{Seq} $ interprets $  \mathsf{Seq}^* $. 
In the next  section,   we show  that 
$ \mathsf{Seq} $ interprets the theory   we obtain by extending $  \mathsf{Seq} $ with $  \mathsf{Seq}^*_3  $ and $  \mathsf{Seq}^*_5 $. 
It  is not clear to us whether $  \mathsf{Seq}^* $ is a sequential theory, that is, 
whether $  \mathsf{Seq}^*$ directly interprets $  \mathsf{AST} $. 
Visser \cite{growing} has shown that Grzegorczyk's theory of concatenation $  \mathsf{TC} $, which has $  \mathsf{Seq}^*_5 $  as a non-logical axiom, 
is not sequential by showing that it does not even have pairing. 
In contrast,  $  \mathsf{Seq}^* $  has  pairing since $  \vdash $ is a pairing function  by $  \mathsf{Seq}_2 $.

\begin{problem}

Is $  \mathsf{Seq}^*$ sequential? 

\end{problem}

\begin{lemma}

There exists a class $  \mathcal{J} $ such that $ \mathsf{Seq} \vdash \phi^{  \mathcal{J} } $ for each axiom $  \phi $ of $  \mathsf{Seq}^* $. 
Furthermore, let  $  x \preceq  y  $ be shorthand for  $  \exists z \in  \mathcal{J}   [       y = xz       ]  $. 
Then, $  \preceq $ is reflexive and transitive,    $  \forall  y   \in  \mathcal{J}     \forall x  \preceq   y      [        x  \in  \mathcal{J}        ]    $
 and $  \forall w   [   \llf{e}{w}  \in     \mathcal{J}          ]    $.  

\end{lemma}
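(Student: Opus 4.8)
The plan is to split the relativised axioms into the trivial ones and those carrying real content. The axioms $\mathsf{Seq}_1$, $\mathsf{Seq}_2$, $\mathsf{Seq}_4$ and the conjunct $xe=x$ of $\mathsf{Seq}^*_3$ are literally among the axioms of $\mathsf{Seq}$; since relativisation only restricts quantifiers, $\mathsf{Seq}\vdash\phi^{\mathcal J}$ for these is immediate for \emph{any} class $\mathcal J$. The genuine content is to secure, on $\mathcal J$, the three laws that $\mathsf{Seq}$ does not prove outright: the left-identity law $L(a):\equiv ea=a$ (the new conjunct of $\mathsf{Seq}^*_3$), associativity $A(c):\equiv\forall a\,b\,[(ab)c=a(bc)]$ (the right-to-left direction of $\mathsf{Seq}^*_5$), and the splitting/editor law $E(w):\equiv\forall x\,y\,z\,[\,xy=zw\rightarrow\exists u(\,(z=xu\wedge uw=y)\vee(x=zu\wedge uy=w)\,)\,]$ (the left-to-right direction).

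The key observation is that each of $L$, $A$, $E$ is provably inductive along the $\vdash$-structure: $\mathsf{Seq}$ proves it at $e$ and proves it preserved by the operation $c\mapsto\llf{c}{d}$, for every $d$. For $L$ we have $ee=e$ by $\mathsf{Seq}_3$, and $e(\llf{a}{d})=\llf{(ea)}{d}=\llf{a}{d}$ by $\mathsf{Seq}_4$ and $L(a)$. For $A$ both sides reduce to $ab$ at $e$ by $\mathsf{Seq}_3$, while $(ab)(\llf{c}{d})=\llf{((ab)c)}{d}=\llf{(a(bc))}{d}=a(\llf{(bc)}{d})=a(b(\llf{c}{d}))$ by repeated use of $\mathsf{Seq}_4$ and $A(c)$. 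For $E$ one argues by cases via $\mathsf{Seq}_5$ on $w$ and on $y$: the base $w=e$ is handled by $\mathsf{Seq}_3$ (take $u=y$), and in the step $w=\llf{w'}{d}$ the nontrivial subcase $y=\llf{y'}{d'}$ reduces, by $\mathsf{Seq}_4$ and the injectivity axiom $\mathsf{Seq}_2$, to the instance $xy'=zw'$ to which $E(w')$ applies, after which $\mathsf{Seq}_4$ lifts the witness back. Note that $E$ is verified using only $\mathsf{Seq}_2,\mathsf{Seq}_3,\mathsf{Seq}_4,\mathsf{Seq}_5$, without associativity.

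Since $\mathsf{Seq}$ has no induction, I would obtain $\mathcal J$ as a definable cut. Starting from $\mathrm{Good}(c):\equiv L(c)\wedge A(c)\wedge E(c)$, which the previous paragraph shows $\mathsf{Seq}$ proves at $e$ and proves closed under $c\mapsto\llf{c}{d}$ for every $d$, I would apply the Solovay-style shortening technique (as in Chapter V of \cite{hp}; compare the use of cuts in \cite{growing}) to pass to a definable $\mathcal J\subseteq\{c:\mathrm{Good}(c)\}$ that contains $e$, is closed under $c\mapsto\llf{c}{d}$ for all $d$, is closed under $\circ$, and is downward closed under $\sqsubseteq$. On such a $\mathcal J$ the three laws hold for the relevant pivot argument while the remaining variables range freely, so the relativised $\mathsf{Seq}^*$-axioms follow: $(\mathsf{Seq}^*_3)^{\mathcal J}$ from $\mathsf{Seq}_3$ and $L$, and both directions of $(\mathsf{Seq}^*_5)^{\mathcal J}$ from $A$ and $E$. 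The witness $u$ produced by $E$ satisfies $uw=y$ or $uy=w$, hence $u\sqsubseteq y$ or $u\sqsubseteq w$; as $y,w\in\mathcal J$, downward closure places $u\in\mathcal J$, which is exactly what the relativised existential quantifier of $\mathsf{Seq}^*_5$ demands.

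Finally the stated properties of $\preceq$ fall out of the design of $\mathcal J$. Reflexivity holds for every $x$ by taking the witness $z=e\in\mathcal J$ together with $xe=x$. For transitivity, if $y=xz_1$ and $w=yz_2$ with $z_1,z_2\in\mathcal J$, then closure of $\mathcal J$ under $\circ$ and associativity give $w=(xz_1)z_2=x(z_1z_2)$ with $z_1z_2\in\mathcal J$. The clause $\forall y\in\mathcal J\,\forall x\preceq y[x\in\mathcal J]$ is immediate from downward closure under $\sqsubseteq$, since $x\preceq y$ implies $x\sqsubseteq y$. And $\llf{e}{w}\in\mathcal J$ because it arises from $e\in\mathcal J$ by a single application of $c\mapsto\llf{c}{w}$, under which $\mathcal J$ is closed for arbitrary $w$. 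The main obstacle is the cut construction itself: the order $\sqsubseteq$ is not provably transitive before associativity is available, so producing a single definable class that is simultaneously downward closed under $\sqsubseteq$ and closed under $\circ$ is the delicate step, and it is precisely here that the shortening technique does the real work.
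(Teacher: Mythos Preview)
Your identification of the three nontrivial laws $L$, $A$, $E$ and their inductive behaviour along $c\mapsto\llf{c}{d}$ is correct and matches the paper's analysis. The gap is that you defer the entire construction of $\mathcal{J}$ to a black-box ``Solovay-style shortening technique'' while simultaneously admitting that the preconditions for that technique are not available: $\sqsubseteq$ is not provably transitive, and without transitivity the standard passage to a subclass closed under $\circ$ and downward closed does not go through. You name this as ``the delicate step'' but do not indicate how to carry it out; in fact the circularity (you need associativity to make the order transitive, but associativity is one of the properties you are trying to secure on the class) cannot be broken by bundling $L\wedge A\wedge E$ into a single predicate and applying a generic shortening.

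The paper resolves this by a two-stage construction that you are missing. First one takes $A$ to be the class of all $z$ with $ez=z$ and $\forall x y\,[x(yz)=(xy)z]$; this is a $\Pi_1$ property in the pivot $z$, so no shortening is needed to see that $A$ contains $e$ and is closed under $\circ$ and $\vdash$. Crucially, the relativised prefix relation $\sqsubseteq_A$ (witnesses required to lie in $A$) is then provably reflexive and transitive. Only now does one define $J\subseteq A$ by demanding the editor property for every $v\sqsubseteq_A w$, with the hypothesis restricted to $y\in A$ and the witness $u$ required to lie in $A$. The associativity available on $A$ is what makes the closure of $J$ under $\circ$ go through; the argument uses identities like $(xy_0)y_1=x(y_0y_1)$ repeatedly with $y_0,y_1\in A$. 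Your one-stage proposal has no analogue of this, and your remark that ``$E$ is verified \ldots\ without associativity'' is only true for the $\vdash$-step, not for closure under $\circ$.

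A secondary point: you claim downward closure of $\mathcal{J}$ under the unrestricted $\sqsubseteq$ and use it to place the editor witness $u$ inside $\mathcal{J}$. The paper does not obtain this; it obtains downward closure only under $\sqsubseteq_A$ (and hence under $\preceq$, since $J\subseteq A$), and the witness $u$ is in $\mathcal{J}$ because the editor clause was formulated with $u\in A$ from the start and $J$ is downward closed under $\sqsubseteq_A$. Your argument that $uw=y$ gives $u\sqsubseteq y$ and hence $u\in\mathcal{J}$ presupposes exactly the unrestricted downward closure that you have not constructed.
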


\begin{proof}

It suffices to define a domain  $J$ such that   the axioms of  $  \mathsf{Seq}^* $ hold restricted to $J$, 
downward closure under $  \preceq $ will be a consequence of how $J$ is defined. 
We start by defining an  intermediate class  $ A    \supseteq   J $.

Let  $A$ denote the class of  all  $z$ such that  
\begin{align}   \label{eksperiment}
 e z = z     \;\;\;  \mbox{ and }   \;\;\; 
   \forall x y    \;     [ \  x (yz) = (xy) z  \  ]     \; .
   \end{align} 
   
 \begin{quote} {\bf (Claim)} \;\;\;\;\;\;
 We have $e\in A$. Furthermore, for any $z_0,z_1\in A$, we have $z_0 z_1\in A$  and $\llf{ z_0 }{ z_1}\in A$.
 \end{quote}  
 
 We prove the claim. By $\fintnavn_3$, we have
  $e e = e$ and    
   $x (ye) = xy = (xy) e$.  
 This proves that $e\in A$.  Assume $z_0,z_1\in A$.  By (\ref{eksperiment}), we have
  $e(z_0z_1) = (ez_0)z_1= z_0z_1$ and
  $$
  x(y(z_0z_1))\; =  \; x((yz_0)z_1) \; =  \; (x(yz_0))z_1 \;  = \;  ((xy)z_0)z_1  \;  =  \; (xy)(z_0z_1)\; .
  $$
 This  proves that $z_0z_1\in A$. Finally, we prove that $\llf{z_0}{z_1}\in A$. 
 By $  \fintnavn_4 $ and   (\ref{eksperiment}), we have 
$ e (\llf{z_0}{    z_1} )  =  \llf{(e z_0  )}{  z_1}   = \llf{z_0}{    z_1}     $. 
Furthermore, we have
  \[\renewcommand{\arraystretch}{1.3}   \begin{array}{lclll}
\;\;\;\;\;\;\;\;\;\;\;\;\;\;\;\;\;\;\;\; x(y(\llf{z_0}{z_1}))  &\; = \;& x(\llf{(yz_0)}{z_1}) &\;\;\;\;\; \;\;\;\;\;&  \ccom{(by $\fintnavn_4$)} \\
   & = & \llf{(x(yz_0))}{z_1}  & &   \ccom{(by $\fintnavn_4$)} \\
    & = & (\llf{(xy)z_0)}{z_1}  & &   \ccom{(by $z_0\in A$ and (\ref{eksperiment}))} \\
       & = & (xy)(\llf{z_0}{z_1})\; .  & &   \ccom{(by $\fintnavn_4$)}\\
\end{array}   \renewcommand{\arraystretch}{1.0} \]
This concludes the proof of the claim.

Let  $  v \sqsubseteq_{A}  w  $  be  shorthand for    $  \exists t  \in A   \,    [         w  =  v t      ]  $.  
Observe that $ \sqsubseteq_A $ is reflexive since $ e \in A $. 
Observe   also that $ \sqsubseteq_A $ is transitive.  
Indeed, assume $  x  \sqsubseteq_A  y $  and   $ y  \sqsubseteq_A z $. 
Then, $  z = y t $ and $ y = xs  $ for some $ s, t \in A $. 
Hence, $ z = (xs) t  = x (st) $ where the second  equality holds since $ t \in A $. 
Since $A$ is closed under $  \circ $, we have $ st \in A $. 
Thus, $ x  \sqsubseteq_A z $.

Let  $J$ denote the class of all $w$ such that   for any $  v \sqsubseteq_{ A }  w  $ 
\begin{multline} \label{heine}
   \forall x z  \;   \forall y \in A      \;   \Big[   \      xy = z v   
   \rightarrow 
\\
 \exists u  \in A    \;      \big[     \     
  \big(   \     ( x = zu    \;    \wedge   \;    uy = v  )    \;   \vee  \;    ( z= xu    \;   \wedge   \;      uv = y   \    )    \     \big)    \        \big]     
\        \Big] 
\   . 
\end{multline}
Since $ \sqsubseteq_A $ is reflexive, $  J  \subseteq   A  $.  
Since $  \sqsubseteq_A $ is transitive, $ J$ is downward closed under  $  \sqsubseteq_A $, that is, 
$ v  \in J $  if $  v  \sqsubseteq_A  w $  and $ w  \in J $. 
This implies that $J$ is also   downward closed under $  \preceq $ since  $ x \preceq y $ implies $ x \sqsubseteq_A y $
 as $ J  \subseteq  A $.  
Downward closure under $  \sqsubseteq_A $ ensures that $  \mathsf{Seq}^*_5 $ holds restricted to $J$. 
The other axioms of $  \mathsf{Seq}^* $ hold restricted to $J$ since they are universal.

We need to prove the next claim.

 \begin{quote} {\bf (Claim)} \;\;\;\;\;\;
 We have $e\in J$. Furthermore, for any $w_0,w_1\in J$, we have $w_0 w_1\in J$  and $\llf{ w_0 }{ w_1}\in J$.
 \end{quote}

It is easy to see that we have $ e  \in J $ since, by $  \fintnavn_1 $,  $  \fintnavn_4 $  and $  \fintnavn_5$,  
we have $ x  \sqsubseteq_A e $ if and only if $ x = e $.

Next we show that $J$ is closed under $\llf{ \,  }{ \, }  $. 
Assume $  w_0, w_1 \in J $ and $ v  \sqsubseteq_A  \llf{w_0 }{w_1} $. 
We need to show that $v$ satisfies (\ref{heine}). 
Now, $ v  \sqsubseteq_A   \llf{w_0 }{w_1} $ is shorthand for $\exists t\in A[vt = \llf{w_0 }{w_1}]$.
By $\fintnavn_5$ and $\fintnavn_3$ we have the two cases
\begin{itemize}
\item Case (i): $v=  \llf{w_0 }{w_1}$ 
\item Case (ii): $vt   = \llf{w_0 }{w_1}$ where $t=\llf{t_0 }{t_1}$ for some $t_0,t_1$.
\end{itemize}

First we deal with case (i). We assume 
$xy = z(\llf{w_0 }{w_1})$ (the antecedent in (\ref{heine})). We will prove that
\begin{align} \label{henrich}
\exists u  \in A    \;      \big[     \     
  \big(   \     ( x = zu    \;    \wedge   \;    uy = \llf{w_0 }{w_1}  )    \;   \vee  \;    ( z= xu    \;   \wedge   \;      u(\llf{w_0 }{w_1}) = y   \    )    \     \big)    \        \big]   
\end{align}
(the succedent in (\ref{heine})) holds. By $\fintnavn_5$ and $\fintnavn_3$, we split the proof 
into the (sub)cases 
\begin{itemize}
\item Case (i.a): $y=e$ and $x = z(\llf{w_0 }{w_1})$  
\item Case (i.b): $x(\llf{y_0 }{y_1}) = z(\llf{w_0 }{w_1})$  for some $y_0,y_1$.
\end{itemize}
In case (i.a), we have $x = z(\llf{w_0 }{w_1}) \wedge (\llf{w_0 }{w_1})y = (\llf{w_0 }{w_1})$, and thus (\ref{henrich}) holds (the first
disjunct holds). We turn to case (i.b). By $\fintnavn_4$, we have $\llf{(xy_0 )}{y_1} = \llf{(zw_0 )}{w_1}$. By $\fintnavn_2$, we have
$xy_0 = zw_0$ and $y_1 = w_1$. Morover, we have assumed $w_0\in J$, and we have $y_0\in A$ since $y\in A$. Thus,
by (\ref{heine}) and $xy_0 = zw_0$, we have
\begin{align} \label{wipes}
 \exists u \in A   \;      \big[     \       
   ( x = zu    \;    \wedge   \;    uy_0 = w_0   )    \;   \vee   \;    ( z= xu    \;   \wedge   \;      uw_0  = y_0    \    )        \        \big]     
   \      .
  \end{align}
From (\ref{wipes}), $\fintnavn_4$ and the fact that $y_1 = w_1$, we can conclude that $(\ref{henrich})$ holds.
This proves that the claim holds in case (i).

We turn to case (ii), that is the case when  $vt= v(\llf{t_0 }{t_1})   = \llf{w_0 }{w_1}$. First we prove that we have $t_0\in A$.

By $  \mathsf{Seq}_4 $ and the fact that $ A  \ni  t =   \llf{t_0}{t_1}  $, we have 
$ \llf{ (e t_0)}{   t_1 } =    \llf{t_0}{t_1}      $   and   $  \llf{  (   (ab) t_0 )}{   t_1}  
   =   \llf{ ( a (bt_0 ) ) }{   t_1}    $, 
which gives $ e t_0 = t_0 $  and  $ (ab) t_0 = a ( bt_0 ) $ by $  \mathsf{Seq}_2 $. 
This proves that $ t_0 \in A $. 

By $  \mathsf{Seq}_4$,  we get $\llf{ w_0}{  w_1} = vt = \llf{(v t_0) }{ t_1} $. 
By $  \mathsf{Seq}_2 $, $ w_0 = v t_0 $ which means $ v \sqsubseteq_A w_0 $. 
It follows that    $v$ satisfies (\ref{heine})  since $ w_0 \in J $. This concludes the proof of case (ii), and thus we have proved that $J$ is
closed under $\llf{ \, }{ \, }$.

Finally, we   show that $J$ is  closed under $\circ$. 
So, let $v_0, v_1 \in J$ and suppose $w  \sqsubseteq_{A} v_0 v_1$. 
We need to show that $w $ satisfies (\ref{heine}). 
From $w \sqsubseteq_{A} v_0 v_1$ and the definition of $ \sqsubseteq_{A}  $,   
we  have  $v_0 v_1  =  wx $  for some $x\in A$.
Since  $v_1 \in J $ and $  \sqsubseteq_A $ is reflexive,   we have one of the following cases for some $s \in A$:
(1)    $v_0= ws  $   and   $ sv_1= x$, 
(2)   $w = v_0 s   $   and   $ s x = v_1$.
In the first case,  $v_0= w s $ and $s\in A$ implies $w \sqsubseteq_{A} v_0$. 
Hence, $ w$ satisfies (\ref{heine}) since  $ v_0 \in J $. 
In the second case,  $s x = v_1$ and $x\in A$ implies $s \sqsubseteq_{A} v_1$. 
Since $v_1 \in J$ and $J$ is downward closed under $  \sqsubseteq_A$, we  get   $s \in J$. 
Let $ w_0 := v_0 $ and $ w_1 := u $. 
We need  to show that $ w_0 w_1 $ satisfies (\ref{heine}). 
So,   assume  $xy = z (w_{0} w_{1})$   and  $ y \in A $. 
Since $w_1 \in A$, we  get  $xy = (z w_{0} ) w_{1} $.
Since  $w_{1} \in J$, we have one of the following cases for some  $  u \in A $: 
(I) $x = (zw_{0} ) u   $  and   $    u y = w_{1} $, 
(II) $zw_{0} = xu   $   and    $   uw_{1} = y  $. 
In case (I),  $x = (zw_{0} ) u $ implies $x = z (w_{0}  u ) $ since $u \in A$. 
Since $A$ is closed under $\circ$, we have $w_{0}  u \in A$. 
Then,  $x = z (w_{0}  u )  $   and   $ w_0 w_1 = w_0 ( uy) = (w_0 u) y$,  where we have used the fact that $y\in A$. 
So, in this case  we   get  that  $w_0 w_1 $ satisfies (\ref{heine}).

We  consider (II). 
It follows from $zw_{0} = xu $, $u \in A$ and $w_0 \in J$ that one of the following holds for some $u^{\prime} \in A$:
(IIa)  $z= x u^{\prime}   $  and   $ u^{\prime} w_{0} = u$,  
(IIb)  $x= z u^{\prime}    $    and   $  u^{\prime} u = w_{0}$. 
From  (IIa) we get $z= x u^{\prime}   $  and   $  y= u w_1  = ( u^{\prime} w_{0} )  w_{1} = u^{\prime}  (w_{0}   w_{1} )  $, 
 where we have used the fact that $w_1 \in A$. 
From  (IIb) we get $x= z u^{\prime}   $  and   $  
w_{0} w_1 =  ( u^{\prime} u  ) w_1 = u^{\prime} (u w_1) =  u^{\prime}  y $, 
 where we have used the fact that $w_1 \in A$ and $u w_1 = y$.
Hence, in case of (II) we also get  that $ w_0 w_1 $ satisfies (\ref{heine}). 
Hence, $ v_0 v_1 \in J $. 
Thus, $J$ is closed under $\circ$, and thus we conclude that the claim holds.

To complete the proof of our lemma, we need to prove yet another claim.

 \begin{quote} {\bf (Claim)} \;\;\;\;\;\; We have  $  \forall w   [   \llf{e}{w}  \in  A     ]    $  and   
 $  \forall w   [     \llf{e}{w}  \in  J      ]    $. 
 \end{quote}

It can be checked that    $  \forall w   [     \llf{e}{w}  \in  A       ]    $  and  that   
  $ v   \sqsubseteq_A  \llf{e}{w} $ if and only if $ v = e  $ or $ v = \llf{e}{w} $. 
Hence, it suffices to show that $ \llf{e}{w}$ satisfies (\ref{heine}). 
So, assume $ y \in A $ and $ xy = z (\llf{e}{w} ) $. 
If $ y = e $, then $  x = z (\llf{e}{w}) $ and $ (\llf{e}{w} ) e = \llf{e}{w} $. 
Otherwise, by $  \mathsf{Seq}_5 $, $ y = \llf{y_0} { y_1 }$ for some $ y_0, y_1 $. 
Since $ y \in A $, we have $ y_0 \in A $. 
By $  \mathsf{Seq}_4 $ and $  \mathsf{Seq}_2 $, 
from  $ xy = z (\llf{e}{w} ) $ we get $ z =  x y_0  $  and $ y_1 = w $, which implies  $  y_0  (\llf{e}{w} )  = y $. 
Hence, $ \llf{e}{w} $ satisfies (\ref{heine}). 
Thus, $ \llf{e}{w} \in J $ and the claim holds. 
\qed
\end{proof}

\subsection{$  \mathsf{Seq} $ Plus Extras  }

In this section we introduce indexed sequences and show that we can reason about them in the usual way. 
Furthermore, we introduce an extension  $  \mathsf{Seq}^+$ of $  \mathsf{Seq} $, which is convenient  for working with indexed sequences, 
and show that it is mutually interpretable with $  \mathsf{Seq} $. 
The theory   $  \mathsf{Seq}^+$   is  given by the axioms of $  \mathsf{Seq} $, the axioms of   $  \mathsf{Seq}^* $ and an axiom $ \mathsf{Seq}^+_c$ which says that $ \circ $ is left and right cancellative, see Figure \ref{fintnavnplussaksiomer}.

\begin{figure}[t] 
$$\begin{array}{lll}
 \fintnavn_1 & \; & \forall x y [  \  \llf{x}{y} \neq \llc   \   ]  \\
 \fintnavn_2 & \; &  \forall x_1 x_2 y_1 y_2 [  \  \llf{x_1}{x_2} = \llf{y_1}{y_2} \; \rightarrow \; (  \  x_1 =y_1  \, \wedge \, x_2 = y_2 \  )       \  ]  \\
 \fintnavn_3   & \; & \forall x  [  \  \llapp{x}{\llc} =  x \   ] \\
 \fintnavn_4   & \; & \forall x y z   [  \  \llapp{x}{(\llf{y}{z})}   = \llf{(\llapp{z}{y})}{z} \     ] \\
 \fintnavn_5  & \; & \forall x    [  \  x= \llc \; \vee \; \exists y z [ \    x= \llf{y}{z}      \ ] \     ] \\
 \mathsf{Seq}^*_3  &   \;  &   
\forall x      [     \     xe = x  \;  \wedge  \;   ex = x    \      ]  \\
\mathsf{Seq}^*_5  &  \;  &   
\forall x y z w       \big[ \  xy = zw   \;\;  \leftrightarrow   \;\;
  \exists u [ \, ( z=xu   \,  \wedge   \,      uw = y )   \,   \vee   \,    (    x = zu     \,  \wedge   \,      uy = w     )   \,   ]     \         \big] \\
  \mathsf{Seq}^+_c  &  \;  & \forall x  y z     [    \   (     xy = x z  \,   \vee  \,   y x = z x       )  \; \rightarrow \;  y = z    \      ] 
\end{array}
$$
\caption{The non-logical axioms of $\fintnavn^+$.
\label{fintnavnplussaksiomer}}
\end{figure}

Recall that we have shown that there exists a domain  $  \mathcal{J} $ such that the axioms of $  \mathsf{Seq}^* $ hold restricted to $ \mathcal{J} $
and $  \forall w   [      \llf{e}{w}  \in    \mathcal{J}       ]    $.
Let   $  x  \preceq y $ be shorthand for    $   \exists z  \in \mathcal{J}     [        y = xz      ]   $. 
Then, $ \mathcal{J} $ is downward closed under the reflexive-transitive relation $ \preceq  $.  
Observe though that $  \mathsf{Seq}_5 $ may not hold restricted to  $  \mathcal{J} $.
In this section, we work in $  \mathsf{Seq} $ but restrict our attention to objects in $  \mathcal{J} $, 
which means that we have the resources of $  \mathsf{Seq}^* $ at our disposal. 
We write $ x \prec y $ for $  x  \preceq y     \;  \wedge  \;     x  \neq  y  $.

We start by defining  a  totally ordered   class $  \mathsf{N}   $  of number-like objects  
and operations of successor $  \mathrm{S} $ and addition  $+$ on $  \mathsf{N} $
 that provably in $  \mathsf{Seq}  $   satisfies the first five axioms of Robinson's $  \mathsf{Q} $: 
 (1) $  \mathrm{S} $ is one-to-one, 
 (2) $0$ is not in the range of $  \mathrm{S} $, 
 (3) any non-zero element has a predecessor, 
 (4)-(5) primitive recursive equations for $+$. 
Let $ 0 := e $,  $ \mathrm{S} x  :=  \llf{x}{  e} $ and $ x + y := x  y $. 
By $  \mathsf{Seq}_1 $, $  \mathsf{Seq}_3 $, $  \mathsf{Seq}_4 $ and $  \mathsf{Seq}_5 $,
$  \forall x  \;   [    \   x    \preceq   0 \leftrightarrow  x = 0   \     ]   $  
and 
$  \forall x, y \;   [    \       (     \     x = \mathrm{S} y  \;   \vee  \;   x   \preceq  y    \      )    \rightarrow    x  \preceq    \mathrm{S} y   \    ]   $
hold restricted to $  \mathcal{J} $. 
We   have 
$  \forall x, y \;   [    \   x  \preceq    \mathrm{S} y      \rightarrow   (     \     x = \mathrm{S} y  \;   \vee  \;   x   \preceq  y    \      )      \    ]   $
by  $  \mathsf{Seq}_5 $, $  \mathsf{Seq}_4 $ and the closure properties of  $  \mathcal{J} $. 
By $  \mathsf{Seq}_2 $ and  $  \mathsf{Seq}_1 $,  $  \mathrm{S} $ is one-to-one and $0$ is not in the range of $  \mathrm{S} $.
By  $  \mathsf{Seq}_3^*$ and $  \mathsf{Seq}_4$,  the primitive recursive equations for  $+$ hold. 
By the right-left implication of  $  \mathsf{Seq}_5^* $,  $  \forall x y z  \;  [    \    (x+y)+z = x + (y+z)    \      ]  $     holds  restricted to $  \mathcal{J} $. 
Let $ [0, x ] $  denote the class of  of all $ u $ such that $ u  \preceq x $. 
Observe that $ \forall x   [   0  \preceq  x   ] $ holds since  $e$ is an identity element with respect to $  \circ $  on $  \mathcal{J} $.

Let $ I $ consist of all $ w  \in \mathcal{J}$  such that for all $  x   \preceq  w $ the following hold: 
\begin{itemize}
\item (1)  $  \left(  \,  [0, x ]  \,  ,  \,   \preceq  \,   \right) $ is a total order
\item (2) $x= 0 $ or there exists $y$ such that $ x =  \mathrm{S} y $
\item (3) if $ u \prec x $, then $  \mathrm{S} u   \preceq   x  $,
\item (4)  if $ y \in \mathcal{J} $ and  $  \forall u  \prec   y     [       \mathrm{S} u  \preceq  y      ]  $
 then  $ x  \preceq  y $ or  $  y  \preceq  x $
 \item (5) for any $ u  \in \mathcal{J} $,     we have    $  \mathrm{S} (  x + u  ) =  \mathrm{S} x + u $
\item (6)   for any $ u, v \in \mathcal{J} $, if $ u+x = v + x $, then $ u = v $. 
\end{itemize}
It can be checked that $I $ contains $0$,  is closed under  $ \mathrm{S}$ and is downward closed under $  \preceq $, that is, 
if $ y  \preceq  x $ and $ x \in I $, then $ y \in   I $. 
Let $ J $ consist of all $ w \in I $ such that for any $ x  \preceq w $ and  any $ u \in I $ we have $ u + x = x + u $. 
It can be checked that $J $ contains $0$,  is closed under  $ \mathrm{S}$ and is downward closed under $  \preceq $. 
We restrict   $I$ to a subclass  $  \mathsf{N} $ that is also closed under addition (this uses associativity of $+$):  
$  \mathsf{N} $ consist of all $ x \in J $ such that $  \forall y \in J     [       y  +  x \in J       ]    $. 
 Let $ x   \leq  y $ be shorthand for $  \exists z \in  \mathsf{N}       [      x +  z = y       ] $. 
We write $ x < y $ for $  x \leq y    \wedge   x  \neq  y $.

\begin{lemma}[$  \mathsf{Seq}  $]
The first five axioms of Robinson's $  \mathsf{Q} $ hold restricted to $  \mathsf{N} $, and
 $( \mathsf{N} , \leq ) $ is a linear order with least element $0$, moreover      
$$  \forall x, y  \in \mathsf{N} \;   [    \   x  \leq \mathrm{S} y   \leftrightarrow     (     \     x = \mathrm{S} y  \;   \vee  \;   x  \leq y    \      )      \    ]   \; .$$
Furthermore, addition on $  \mathsf{N} $  is associative,  commutative and cancellative, 
and  $  \forall x \in \mathsf{N}    \forall  y  \leq x    [       y  \in  \mathsf{N}       ]     $. 

\end{lemma}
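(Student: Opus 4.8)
The plan is to treat $\mathsf{N}$ as a substructure of $\mathcal{J}$ and to reduce every assertion to one of two sources: a fact already established to hold \emph{restricted to} $\mathcal{J}$ (the Robinson-style successor and recursion facts, associativity of $+$, and the equivalences relating $\preceq$ to $\mathrm{S}$), or one of the local clauses deliberately built into the definitions of $A$, $I$, $J$ and $\mathsf{N}$. Accordingly the first task is to record the structural closure properties of $\mathsf{N}$: that $0\in\mathsf{N}$, that $\mathsf{N}$ is closed under $\mathrm{S}$ and under $+$, and that $\mathsf{N}$ is downward closed under $\preceq$. Once these are in place, the five $\mathsf{Q}$-axioms and the algebraic laws follow almost mechanically, and only linearity of $\le$ requires a genuine idea.

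For the closure properties I would argue as follows. We have $0=e\in\mathsf{N}$ because $y+0=y$ keeps us inside $J$; closure under $\mathrm{S}$ uses the recursion equation $y+\mathrm{S}x=\mathrm{S}(y+x)$ together with closure of $J$ under $\mathrm{S}$; closure under $+$ uses associativity of $+$ on $\mathcal{J}$ for the defining condition of $\mathsf{N}$, and the commutativity clause of $J$ (which, since $\mathsf{N}\subseteq J$, makes any $x\in\mathsf{N}$ commute with every element of $I$) to land the product back in $J$. Downward closure under $\preceq$ is the key auxiliary fact: if $x\in\mathsf{N}$ and $y\preceq x$, say $x=y+z$ with $z\in\mathcal{J}$, then $y\in J$ by downward closure of $J$, and for any $w\in J$ we have $w+y\preceq (w+y)+z=w+x\in J$, so $w+y\in J$; hence $y\in\mathsf{N}$. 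In particular this yields the last assertion of the lemma, $\forall x\in\mathsf{N}\,\forall y\le x\,[\,y\in\mathsf{N}\,]$, since $\le$ refines $\preceq$.

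With closure in hand the remaining easy items fall out. Injectivity of $\mathrm{S}$ and the fact that $0$ is not a value of $\mathrm{S}$ are inherited from $\mathcal{J}$; the predecessor axiom is clause (2) of $I$ applied to $x\preceq x$, with the predecessor $y$ lying in $\mathsf{N}$ because $y\preceq\mathrm{S}y=x$ and $\mathsf{N}$ is downward closed; the recursion equations for $+$ hold on all of $\mathcal{J}$. Associativity of $+$ comes from the right-to-left direction of $\mathsf{Seq}^*_5$, commutativity from the defining clause of $J$, and cancellativity from clause (6) of $I$ (right cancellation) combined with commutativity. That $0$ is the least element is immediate; reflexivity and transitivity of $\le$ use $0\in\mathsf{N}$, associativity and closure under $+$; antisymmetry uses cancellativity together with the fact that a sum is $0$ only when both summands are $0$ (if a summand were a successor, clause (5) would make the whole sum a successor, contradicting that $0$ is not a value of $\mathrm{S}$).

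The real work is \textbf{linearity}, and with it the successor biconditional for $\le$; both reduce to showing that $\preceq$ and $\le$ \emph{coincide} on $\mathsf{N}$. First, $\preceq$ is total on $I$: every $y\in I$ satisfies $\forall u\prec y\,[\mathrm{S}u\preceq y]$ by clause (3), so clause (4) for $x$ gives $x\preceq y\vee y\preceq x$. It then remains to prove the one nontrivial implication $x\preceq y\Rightarrow x\le y$ for $x,y\in\mathsf{N}$; writing $y=x+z$ with $z\in\mathcal{J}$, this amounts to showing the right-difference $z$ lies in $\mathsf{N}$. I expect this to be the main obstacle: the $\preceq$-witness is only in $\mathcal{J}$, and to promote it to $\mathsf{N}$ by downward closure I must first obtain $z\preceq y$, which is \emph{not} automatic, since $z$ is a right factor rather than a prefix. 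The route I would take is to show $z\in I$ first, transferring the cut-clauses (1)--(6) from the segment $[x,y]\subseteq I$ to $[0,z]$ along the map $u\mapsto x+u$, which is an order isomorphism by the commutativity of $x$ with $I$ (from $J$) and by cancellation (clause (6)); then, $x$ commuting with $z\in I$ gives $y=x+z=z+x$, whence $z\preceq y$ and finally $z\in\mathsf{N}$ by downward closure. Once $\preceq$ and $\le$ agree on $\mathsf{N}$, totality transfers from $I$, and the biconditional $x\le\mathrm{S}y\leftrightarrow(x=\mathrm{S}y\vee x\le y)$ transfers from the corresponding $\preceq$-equivalence already available on $\mathcal{J}$.
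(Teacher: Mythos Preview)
The paper does not actually prove this lemma: after defining $I$, $J$ and $\mathsf{N}$ it simply says ``it can be checked'' that the relevant closure properties hold and then states the lemma. So your proposal is filling in what the paper leaves to the reader, and for everything except linearity it does so correctly: the closure of $\mathsf{N}$ under $0,\mathrm{S},+$ and under $\preceq$, the five $\mathsf{Q}$-axioms, associativity, commutativity, cancellativity on $\mathsf{N}$, and the order-theoretic trivia are all handled the way the construction intends.

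The gap is exactly where you flag it, but your sketched route does not close it. You want to show that the $\preceq$-witness $z$ in $y=x+z$ lies in $I$ by transporting clauses (1)--(6) along $u\mapsto x+u$ from $[x,y]\subseteq I$ to $[0,z]$, and you justify this map being an order isomorphism via ``commutativity of $x$ with $I$'' and ``cancellation (clause~(6))''. Neither tool applies: clause~(6) is \emph{right} cancellation of $x$, whereas injectivity of $u\mapsto x+u$ is \emph{left} cancellation; and the commutativity coming from $x\in J$ only lets $x$ commute with elements of $I$, while the $u$'s in $[0,z]$ are merely in $\mathcal{J}$. Concretely, to transfer clause~(2) you end up with $x+v=x+\mathrm{S}u$ (or $x+v=x+0$) and need to conclude $v=\mathrm{S}u$ (or $v=0$), which is precisely left cancellation of $x$ over $\mathcal{J}$; the same obstruction recurs for clauses~(1) and~(6). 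The natural repair is to build left cancellation over $\mathcal{J}$ into the cut from the start (an extra clause $\forall a,b\in\mathcal{J}\,[\,x+a=x+b\rightarrow a=b\,]$ is inductive by clause~(5) and injectivity of $\mathrm{S}$), after which your three-step plan---$z\in I$, then $x+z=z+x$ from $x\in J$, then $z\preceq y$ and downward closure---goes through cleanly.
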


To improve the readability, we will occasionally  
use $ (x , y ) $ as alternative notation for $  \llf{x}{y} $ (we will use this notation when $ x  \in  \mathsf{N}$ 
and it is natural to regard $x$ an index pointing to $y$).

We will now define the class $\mathcal{S}_0 $ of \emph{indexed  sequences}.
Let  $ f  \in  \mathcal{S}_0 $ iff $f  \in \mathcal{J} $ and there exist $ m , n \in \mathsf{N} $ such that
\begin{itemize}
\item (0) $ m \leq n $
\item (1) for each  $ m \leq  k \leq n $ there exists  $u  \in \mathcal{J} $ such that   
$ f =  \big( \llf{s}{(k,u)}  \big) t $ for some $s, t   \in \mathcal{J} $
\item (2) if $ f  =  ( \llf{s}{w}) t $ for some $ s, t, w   \in \mathcal{J} $, 
then there exist $  m \leq k \leq n $ and $ u  \in  \mathcal{J}  $ such that $ w = (k, u ) $
\item (3) if $  f =  \big( \llf{s}{(k, u)}   \big) t  $ and $ s \neq e $, then $ m < k $ and  there exist $ \ell  ,  v ,  s_0   \in \mathcal{J}$  such that 
$ k = \mathrm{S}  \ell $ and  $ s = \llf{s_0}{ ( \ell, v )} $
\item (4) if $  f =  \big( \llf{s}{(k, u)}  \big) t  $, then  there is no $ k  \leq  k^{ \prime }  \in \mathsf{N} $ and $ u^{  \prime } , s_0 , t_0  \in   \mathcal{J} $ such that 
$ s  =  (  \llf{s_0}{ ( k^{  \prime } ,   u^{  \prime } )}    )   t_0   $. 
\end{itemize}


If $f\in  \mathcal{J}$ and (0), (1),  (2), (3), (4) hold,
we will call $ [m, n ] $ the domain of $f$ and use the suggestive notation $ f: [m, n ]  \to \mathcal{J}  $, furthermore,
if $ m \leq k \leq n $, we write $ f(k) $ for the unique element $u\in  \mathcal{J}$ 
such that  $ f =  \big( \llf{s}{  (k,  u )}  \big) t $ for some $s, t   \in \mathcal{J} $.
To see that $u$ indeed is unique, assume 
$ f = \big( \llf{s_0}{ (k,  u )}   \big)  t_0  $  and   $ f = \big( \llf{s_1}{ (k,  v )}   \big)  t_1  $.
Since the editor axiom  holds restricted to $  \mathcal{J} $ 
(as  $  \mathsf{Seq} \vdash   ( \mathsf{Seq}_5^{ *} )^{  \mathcal{J}  }  $), 
we have one of the following:
(i) $ \llf{s_0}{ (k,  u )}   =  \llf{s_1}{ (k,  v )} $, 
(ii)  there exists $ w \in \mathcal{J} \setminus  \lbrace e  \rbrace  $ such that  
$ \llf{s_0}{(k,  u )}     =  \big(        \llf{s_1}{(k,  v )}      \big)   w $, 
(iii) there exists $ w \in \mathcal{J}  \setminus  \lbrace e  \rbrace $ such that   
$  \llf{s_1}{(k,  v ) }    =  \big( \llf{s_0}{(k,  u ) }  \big)   w $.
In case of (i), we get $ u = v $ by  $  \mathsf{Seq}_2 $. 
We show that (ii) and (iii) yield contradictions. 
We consider (ii). 
By $  \mathsf{Seq}_5 $, there exist $ w_0, w_1 $ such that $ w = \llf{w_0}{w_1}  $. 
By $  \mathsf{Seq}_3 $ and $  \mathsf{Seq}_4 $, we get $ w = w_0 ( \llf{e}{w_1} ) $. 
By the closure properties of $  \mathcal{J} $, we have   $  \llf{e}{w_1}  \in   \mathcal{J} $. 
By $  \mathsf{Seq}_2 $ and  $  \mathsf{Seq}_4 $, 
 from the equality    $ \llf{s_0}{(k,  u )}   =   \big(\llf{s_1}{(k,  v )}   \big)   \big(  \llf{w_0}{w_1}      \big)  $  we get 
$ s_0  =   \big( \llf{s_1}{(k,  v )}   \big)   w_0 $, which contradicts  (4). 
By similar reasoning, (iii) yields a contradiction.
Thus, we conclude that $ u = v $.

We call an element $f$ an \emph{indexed  sequence}  if and only if $ f  \in  \mathcal{S}_0 $. 
The following lemma can be proved  using, in particular, the fact that  the editor axiom holds restricted to $  \mathcal{J} $.

\begin{lemma}
Consider an indexed sequence $ f  : [ m , n ]  \to \mathcal{J} $ and  an index $ m \leq k < n $. 
Then, there exist  two unique indexed sequence $ g :  [ m, k   ]  \to \mathcal{J} $ and  $ h :  [ k+1, n ]  \to \mathcal{J} $  
such that $ f = g \circ h $. 
Furthermore, $ g = \llf{s} { (k, f(k) )} $ and $ h = \llf{t}{    (n, f(n) )} $ for some $ s, t  \in  \mathcal{J} $. 
\end{lemma}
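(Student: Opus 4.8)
The plan is to read off $g$ and $h$ from the single decomposition of $f$ that isolates the pair of index $k$, and then to check that each half is again an indexed sequence. Since $m\le k\le n$, condition (1) gives $f=\big(\llf{s}{(k,f(k))}\big)t$ for some $s,t\in\mathcal{J}$, and condition (4) tells us that $s$ carries no pair of index $\ge k$. I set $g:=\llf{s}{(k,f(k))}$ and $h:=t$. The identity $f=g\circ h$ then holds by construction, so the whole content of the lemma is that $g$ is an indexed sequence with domain $[m,k]$ and $h$ one with domain $[k+1,n]$, together with uniqueness.

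The tool for both verifications is the editor axiom $\mathsf{Seq}^{*}_{5}$ (available on $\mathcal{J}$) combined with conditions (2) and (4) of $f$. To place the pair of a given index $j$ inside the correct half I would start from the decomposition $f=\big(\llf{s_j}{(j,f(j))}\big)t_j$ supplied by condition (1) and compare it with $f=g\circ h$ via $\mathsf{Seq}^{*}_{5}$; this yields a witness $u\in\mathcal{J}$ tying the two prefixes together, and in each application the unwanted alternative is killed by condition (4). For instance, when $j>k$ the alternative $\llf{s}{(k,f(k))}=\big(\llf{s_j}{(j,f(j))}\big)u$ would, after writing $u=\llf{u_0}{u_1}$ and using $\mathsf{Seq}_4$ and $\mathsf{Seq}_2$, put the pair $(j,f(j))$ with $j\ge k$ inside $s$, contradicting (4). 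Carried through, this bookkeeping shows that every index $\le k$ sits in $g$ and every index in $(k,n]$ sits in $h$, which is condition (1) for both halves and, read against condition (2) of $f$, also gives condition (2) for each. For $g$ the remaining conditions are inherited almost verbatim, since a decomposition of $g$ maps to one of $f$ upon right-multiplying by $h$: (0) is $m\le k$, (4) holds because $s$ has no index $\ge k$, and (3) is the restriction of (3) for $f$. For $h$ the same conditions come out of the same case analysis, the one new point being that its least index $k+1$ must occur with empty left context.

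For uniqueness, suppose $f=g'\circ h'$ with $g'\colon[m,k]$ and $h'\colon[k+1,n]$. The last pair of $g'$ lies at its maximal index: writing $g'=\llf{g'_0}{q}$, condition (2) makes $q=(j,u)$, and if $j<k$ then the necessarily present pair of index $k$ would lie in $g'_0$, contradicting (4); hence $g'=\llf{s'}{(k,g'(k))}$ with $s'$ free of indices $\ge k$, and $g'(k)=f(k)$ by the uniqueness of $f(k)$ established above. Applying $\mathsf{Seq}^{*}_{5}$ to $\big(\llf{s}{(k,f(k))}\big)h=\big(\llf{s'}{(k,f(k))}\big)h'$ produces $u\in\mathcal{J}$ with $\llf{s'}{(k,f(k))}=\big(\llf{s}{(k,f(k))}\big)u$ or the symmetric equation; were $u\neq e$, then $u=\llf{u_0}{u_1}$ with $u_0\in\mathcal{J}$ (as $\mathcal{J}$ is downward closed under $\preceq$ and $u_0\preceq u$), and $\mathsf{Seq}_4$, $\mathsf{Seq}_2$ would again force $(k,f(k))$ into the prefix against (4). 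So $u=e$, and $\mathsf{Seq}^{*}_{3}$ gives $g=g'$ and $h=h'$. The ``furthermore'' clause is then immediate: $g=\llf{s}{(k,f(k))}$ by definition, and since $h\neq e$ we may write $h=\llf{t_0}{t_1}$ and read off $t_1=(n,f(n))$ from the final pair of $f$ through $\mathsf{Seq}_4$ and $\mathsf{Seq}_2$.

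I expect the verification of the structural conditions for the suffix $h$ to be the main obstacle. The prefix $g$ is literally an initial segment of $f$ and keeps (3) and (4) with hardly any argument, whereas $h$ has a fresh left boundary, so showing that its pairs are exactly the pairs of $f$ of index $>k$ and that the index-$(k+1)$ pair sits at the very front forces one to translate decompositions of $h$ into decompositions of $f$ repeatedly and to discard spurious cases with (4). A secondary difficulty is that we are working in $\mathsf{Seq}$ restricted to $\mathcal{J}$, where outright cancellation is unavailable; every step that looks like a cancellation has to be routed through $\mathsf{Seq}^{*}_{5}$, relying on the downward closure of $\mathcal{J}$ under $\preceq$ to keep the auxiliary witnesses inside $\mathcal{J}$.
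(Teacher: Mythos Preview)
The paper does not give a proof of this lemma; it only remarks that it ``can be proved using, in particular, the fact that the editor axiom holds restricted to $\mathcal{J}$.'' Your proposal does exactly that: you obtain $g$ and $h$ from the decomposition supplied by clause (1), and every subsequent step is a comparison of two decompositions of $f$ via $\mathsf{Seq}^{*}_{5}$, with the unwanted alternative eliminated by clause (4). This is the argument the paper is pointing at, and it matches the shape of the uniqueness proof for $f(k)$ that the paper spells out just before the lemma.

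Two minor remarks. First, to see that $g=\llf{s}{(k,f(k))}$ lies in $\mathcal{J}$ you can simply observe that $g\preceq f$ (since $f=g\circ h$ with $h\in\mathcal{J}$) and invoke the downward closure of $\mathcal{J}$ under $\preceq$; no appeal to closure under $\vdash$ is needed. Second, your justification that $u_0\in\mathcal{J}$ from $u=\llf{u_0}{u_1}\in\mathcal{J}$ is correct but should be spelled out as in the paper's uniqueness argument for $f(k)$: one has $u=u_0\circ(\llf{e}{u_1})$ by $\mathsf{Seq}_3$ and $\mathsf{Seq}_4$, and $\llf{e}{u_1}\in\mathcal{J}$ by the clause $\forall w\,[\,\llf{e}{w}\in\mathcal{J}\,]$, whence $u_0\preceq u$ and $u_0\in\mathcal{J}$. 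With these small clarifications your sketch is sound, and your identification of the suffix $h$ as the delicate half is accurate.
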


From now on, we refer to the unique indexed sequences $g$ and  $h$ of the preceding lemma as the restrictions of $f$ to 
$ [m, k ] $ and  $ [k+1, n] $, respectively.

\begin{theorem}  \label{MainTheoremIndexedSequences}

There exists a subdomain $  \mathsf{N}^{ \star } $ of $  \mathsf{N} $ such that 
\begin{itemize}

\item \textup{(I)}  for any  element $x \in  \mathcal{J}$ and any $   n \in   \mathsf{N}^{ \star }  $
 there exists a unique  indexed sequence  $ \frac{ x }{ [0, n ] }  : [0, n ]  \to \mathcal{J}  $   such that 
$  \frac{ x }{ [0, n ] }  (j)  = x $ for all $  0   \leq j \leq n $

\item \textup{(II)}  for   any $ k, n \in  \mathsf{N}^{ \star }  $  and any     indexed sequence  $ f : [0, n ]  \to \mathcal{J}  $,
 there exists   a    unique  indexed sequence
$  f^{ 0 }_{ k }    : [k, k+n ]  \to \mathcal{J}  $
 such that $ f^{ 0 }_{ k }   (k+j ) = f(j)  $      for all $ 0 \leq j \leq n $

\item \textup{(II)}   for   any $ n \in  \mathsf{N}^{ \star }  $  and any   two  indexed sequences  $ f, g : [0, n ]  \to \mathcal{J}  $,
 there exist two   unique  indexed sequences  
$ f \oplus g  : [0, n ]  \to \mathcal{J}  $ and  $ f \ominus g   : [0, n ]  \to \mathcal{J}  $
 such that $ f \oplus g    (j ) = f(j) + g(j) $ and  $ f \ominus g  (j) = f(j) \dotdiv g(j) $     for all $ 0 \leq j \leq n $

\item \textup{(III)}   for   any $ n, m  \in  \mathsf{N}^{ \star }  $  and any   two  indexed sequences  $ f : [0, n ]  \to \mathcal{J}  $ and  $ g: [0, m ]  \to \mathcal{J}  $
there exists a unique   indexed sequence $ f ^{ \frown }   g  : [0, n+m +1  ]  \to \mathcal{J}  $ 
 such that $  f ^{ \frown }   g = f \circ g ^{ 0 }_{ n+1 }   $   and  
 \[
 f  ^{ \frown } g    (j ) = \begin{cases}
  f(j) &   \mbox{ if }  0 \leq j \leq n
  \\
  g(j \dotdiv (n+1)  )   &   \mbox{ if }  n+1  \leq j \leq  n+m+1 
  \end{cases}
\]

\item \textup{(IV)}  for   any $ n \in  \mathsf{N}^{ \star }  $, any $ 0 \leq k  < n $  and any     indexed sequence  $ f : [0, n ]  \to \mathcal{J}  $,
 there exist     unique  indexed sequences  $  g  : [0,   k  ]    \to \mathcal{J}  $  and $  h  : [0,   n \dotdiv  (k+1)   ]    \to \mathcal{J}  $ 
 such that $ f = g^{ \frown }  h  $

\item \textup{(V)}  for   any $ n, m, k   \in  \mathsf{N}^{ \star }  $  and any   three   indexed sequences  $ f : [0, n ]  \to \mathcal{J}  $,   $ g: [0, m ]  \to \mathcal{J}  $
and $ h: [0, k ]  \to \mathcal{J}  $, we have $ ( f  ^{ \frown } g )  ^{ \frown } h = f  ^{ \frown }  ( g  ^{ \frown }  h ) $

\item  \textup{(VI)}  for   any $ n, m, k, \ell    \in  \mathsf{N}^{ \star }  $  and any   four    indexed sequences  $ f : [0, n ]  \to \mathcal{J}  $,   $ g: [0, m ]  \to \mathcal{J}  $, 
 $ p : [0, k ]  \to \mathcal{J}  $ and $  q  : [0,  \ell  ]  \to \mathcal{J}   $  such that 
 $    f  ^{ \frown }  g = p   ^{ \frown }    q $, we have one of the following:  
 \begin{itemize}

\item $ f = p $    and $ g = q $

\item there exist a unique   $ r  \in  \mathsf{N}^{ \star }     $ and   a unique  indexed sequence  $ h  :  [0, r ]  \to \mathcal{J}  $ such that 
$ f = p ^{ \frown } h $  and $ h  ^{ \frown } g = q $

\item there exist unique $ r \in    \mathsf{N}^{ \star }  $ and   a unique   indexed sequence  $ h  :  [0, r ]  \to \mathcal{J}  $ such that 
$ p = f ^{ \frown } h $  and $ h  ^{ \frown } q = g $.

 \end{itemize}

\end{itemize}

\end{theorem}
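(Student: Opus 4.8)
The plan is to produce $\mathsf{N}^{\star}$ as a cut of $\mathsf{N}$ --- downward closed under $\preceq$, containing $0$, closed under $\mathrm{S}$ and, crucially, closed under $+$ --- whose defining predicate $\Theta(n)$ bundles together the existence and uniqueness assertions of (I)--(VI) for every index $\preceq n$. Plain closure under $\mathrm{S}$ will not suffice: clause (III) returns a sequence on $[0,n+m+1]$, so the domain must be closed under $(n,m)\mapsto n+m+1$, and I would first secure an additively closed cut by the usual shortening of a cut in a $\mathsf{Q}$-like structure. Because every universal claim in the theorem is relativised to $\mathsf{N}^{\star}$, the absence of an induction axiom is harmless: to verify $\Theta(0)$ and $\Theta(n)\rightarrow\Theta(\mathrm{S}n)$ is exactly to show $0\in\mathsf{N}^{\star}$ and that $\mathsf{N}^{\star}$ is closed under $\mathrm{S}$, while downward closure comes for free from the shape of $\Theta$.

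For the constructive clauses I would argue along $\mathsf{N}^{\star}$. In (I) the base object is the single pair $(0,x)$ and the successor step appends $(\mathrm{S}n,x)$, i.e. $\frac{x}{[0,\mathrm{S}n]}=\llf{\frac{x}{[0,n]}}{(\mathrm{S}n,x)}$; uniqueness follows from conditions (0)--(4) together with the fact that the editor axiom holds restricted to $\mathcal{J}$, exactly as in the uniqueness argument for $f(k)$ above, and downward closure uses the preceding restriction lemma. The shift $f^{0}_{k}$ in (II) is built by the same recursion, re-indexing the last pair as $(k+n,f(n))$ and invoking associativity and commutativity of $+$ on $\mathsf{N}$ to compute the new indices; the pointwise $\oplus$ and $\ominus$ are analogous, using $+$ and $\dotdiv$ on $\mathsf{N}$. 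Clause (III) is then a definition, $f^{\frown}g:=f\circ g^{0}_{n+1}$, and the work is to check that this is an indexed sequence on $[0,n+m+1]$ (here closure of $\mathsf{N}^{\star}$ under $+$ is used) and to read off its values from the splitting lemma. Clause (IV) inverts this: restrict $f$ to $[0,k]$ to obtain $g$, restrict to $[k+1,n]$ and shift down by $k+1$ --- the inverse of the operation of (II) --- to obtain $h$ on $[0,n\dotdiv(k+1)]$, then verify $f=g^{\frown}h$.

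Clause (V) unwinds both sides to the common form $f\circ g^{0}_{n+1}\circ h^{0}_{n+m+2}$, using associativity of $\circ$ on $\mathcal{J}$ (the right-to-left direction of $\mathsf{Seq}^{*}_5$) together with the shift law $(g^{\frown}h)^{0}_{n+1}=g^{0}_{n+1}\circ h^{0}_{n+m+2}$, which I would record as an auxiliary identity about shifts of concatenations.

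The main obstacle is clause (VI), the editor axiom lifted from $\mathcal{J}$ to indexed sequences. I would apply the editor axiom to the underlying equation $f\circ g^{0}_{n+1}=p\circ q^{0}_{k+1}$ and then split on the order of the two cut points $n$ and $k$ in $\mathsf{N}^{\star}$. When $n=k$, uniqueness of splitting from (IV) forces $f=p$ and $g=q$. When $n<k$, the initial block of $p^{\frown}q$ of length $n+1$ must coincide with $f$, and the middle block $h$ is the restriction of $p$ to $[n+1,k]$ shifted down to $[0,k\dotdiv(n+1)]$; one then checks $f=p^{\frown}h$ and $h^{\frown}g=q$ with $r:=k\dotdiv(n+1)\in\mathsf{N}^{\star}$, and the case $n>k$ is symmetric. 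The delicate points, which is where I expect the real effort, are keeping all the re-indexed blocks inside $\mathsf{N}^{\star}$, reconciling the index arithmetic ($+$, $\dotdiv$, $<$) with the purely concatenational statement produced by the editor axiom, and matching the uniqueness of $r$ and $h$ to the structural uniqueness already established for indexed sequences.
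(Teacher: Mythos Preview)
Your overall architecture matches the paper's: define a predicate $\Theta(n^{\star})$ bundling (I)--(VI) with all indices relativised to $[0,n^{\star}]$, check that the resulting class $K$ contains $0$, is closed under $\mathrm{S}$, and is downward closed, and then shorten $K$ to an additively closed cut $\mathsf{N}^{\star}$ in the standard way. Your treatment of clauses (I)--(V) is essentially the same as what the paper leaves implicit.

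Where you diverge from the paper is in the verification of (VI) in the successor step. The paper does \emph{not} trichotomise on $n$ versus $k$; instead it peels off the last entry of $g$ and of $q$, writing $g=g_0^{\frown}\frac{g(m)}{[0,0]}$ and $q=q_0^{\frown}\frac{q(\ell)}{[0,0]}$, then applies the editor axiom in $\mathcal{J}$ to strip those singletons and obtain $f^{\frown}g_0=p^{\frown}q_0$ with $m-1,\ell-1\le n^{\star}$. This is exactly the shape to which the induction hypothesis (VI at level $n^{\star}$) applies, and one re-attaches the common last value $g(m)=q(\ell)$ at the end. The cases $m=0$ or $\ell=0$ are handled separately. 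Your route---splitting at the smaller of $n,k$ via the restriction lemma and (IV), and reading off the middle block $h$ directly---can also be made to work, but you must be careful that every use of (III), (IV) and the shift operation stays within the level $n^{\star}+1$ you are currently establishing; the paper's peeling argument sidesteps this bookkeeping because it calls only (VI) itself at the strictly lower level $n^{\star}$. Incidentally, you have the two non-trivial disjuncts of (VI) swapped: when $n<k$ the longer initial block is $p$, so the conclusion is $p=f^{\frown}h$ and $h^{\frown}q=g$, not $f=p^{\frown}h$.
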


\begin{proof}
Let $K$ consist of all $ n^{ \star }   \in \mathsf{N} $ such that  (I)-(VI) hold when we replace each occurrence  of 
$ i  \in \mathsf{N}^{ \star } $ with $ i    \leq  n^{ \star }  $. 
Clearly, $K$ is downward closed under $  \leq $. 
It can be checked that $ K$ is closed under $0 $ and $  \mathrm{S} $. 
We settle with verifying that if $  n^{ \star }  \in K $, then (VI) holds when we replace occurrences of  $ i  \in \mathsf{N}^{ \star } $ with   $ i \leq   n^{ \star }  +1 $. 
So, assume  $ n, m, k, \ell   \leq   n^{ \star }  +1 $ and   $    f  ^{ \frown }  g = p   ^{ \frown }    q $ where 
\[
f : [0, n ]  \to \mathcal{J}   \, ,   \    \     \    g: [0, m ]  \to \mathcal{J}  \, ,   \    \     \   
   p : [0, k ]  \to \mathcal{J}    \, ,   \    \     \       q  : [0,  \ell  ]  \to \mathcal{J}   
   \       . 
\]
We have three cases: (i) $ m , \ell   > 0 $, (ii) $ m = 0 $, (iii) $\ell  = 0 $. 
We consider (i), the other cases are handled by similar reasoning. 
Let $g_0$ and $ q_0$ denote the restriction of $g$ and $q$ to  $ [0, m \dotdiv 1 ] $ and $ [0, \ell \dotdiv 1 ] $, respectively. 
We have 
\[
 g \; = \; \llf{g_0}{  (m, g(m) )} \;  = \;  g_0 ^{ \frown } \frac{ g(m) }{ [0, 0 ] }
\    \    \mbox{  and   }      \      \    
  q \; =  \; \llf{q_0} { ( \ell , q( \ell )    )} \;    = \;  q_0 ^{ \frown } \frac{ q( \ell ) }{ [0, 0 ] } 
  \       .
  \]
By the editor axiom restricted to $  \mathcal{J} $, from  
\begin{align*}
f \circ (g_0)^{ 0 }_{ n+1} \circ  \frac{ g(m) }{ [n+m+1, n+m+1 ] }    
&=
  f  ^{ \frown }  g 
  \\
  &= p   ^{ \frown }    q 
  \\
&=
 p \circ  (q_0)^{ 0 }_{ k+1} \circ  \frac{ q( \ell ) }{ [k+ \ell + 1, k+ \ell +1 ] }   
\end{align*}
we  get $    f  ^{ \frown }  g_0  = p   ^{ \frown }    q_0 $   and  $ g(m)   =   q( \ell )   $. 
Since  $  n^{ \star }  \in K $, from  $    f  ^{ \frown }  g_0  = p   ^{ \frown }    q_0 $ we have that one of the following holds: 
(1)  $ f = p $    and $ g_0  = q_0 $, 
(2)    there exist a unique   $ r  \leq   n^{ \star }   $ and   a unique  indexed sequence  $ h  :  [0, r ]  \to \mathcal{J}  $ such that 
$ f = p ^{ \frown } h $  and $ h  ^{ \frown } g_0  = q_0 $, 
(3)   there exist a unique  $ r      \leq   n^{ \star }   $ and   a unique   indexed sequence  $ h  :  [0, r ]  \to \mathcal{J}  $ such that 
$ p = f ^{ \frown } h $  and $ h  ^{ \frown } q_0  = g_0  $. 
From this and the fact that $ g(m)   =   q( \ell )   $, we get that 
one of the following holds: 
(4)  $ f = p $    and $ g  = q $, 
(5)    there exist a unique   $ r  \leq   n^{ \star }   $ and   a unique  indexed sequence  $ h  :  [0, r ]  \to \mathcal{J}  $ such that 
$ f = p ^{ \frown } h $  and $ h  ^{ \frown } g  = q $, 
(6)   there exist  a unique $ r     \leq   n^{ \star }   $ and   a unique   indexed sequence  $ h  :  [0, r ]  \to \mathcal{J}  $ such that 
$ p = f ^{ \frown } h $  and $ h  ^{ \frown } q  = g  $.

We obtain  $  \mathsf{N}^{ \star } $ by restricting    $K$ to a subclass  which is also closed under addition: 
$  \mathsf{N}^{  \star } $ consist of all $ x   $ such that $  \forall y \in K  \;   [    \    y + x \in K    \     ]    $. 
It can be checked that $  \mathsf{N}^{  \star } $ is closed under $0, \mathrm{S} , + $ and is downward closed under $  \leq  $. 
\qed
\end{proof}

We interpret $  \mathsf{Seq}^+ $ in $  \mathsf{Seq} $ as follows: 
\begin{itemize}

\item The domain consists  of  $e$   and all indexed  sequences    $ f : [0, n ]  \to  \mathcal{J} $
 where $ n \in  \mathsf{N}^{  \star }   $
and $  f(j) \in  \mathcal{J} $ for all $  0 \leq j  \leq  n  $.

\item Let $e^{  \tau }  =  e $.

\item  If $ f = e $, let     $ f  \circ^{  \tau }   g  = g $. 
If $ g = e $,   let     $ f  \circ^{  \tau }   g  = f $. 
For  $ f : [0, n ]   \to    \mathcal{J} $ and    $ g : [0, m ]   \to    \mathcal{J} $, 
 let    $  f  \circ^{  \tau }   g   =  f  ^{ \frown }   g  $.

\item If $ f = e $, let   $ f  \vdash^{  \tau }   g   = \frac{g}{ [0, 0 ] }$.
For  $ f : [0, n ]   \to    \mathcal{J} $, 
 let     $ f  \vdash^{  \tau }   g = f ^{ \frown }  \frac{g}{ [0, 0 ] }  $.

\end{itemize}
It follows from the preceding theorem that  this defines an interpretation  of $  \mathsf{Seq}^+ $ in $  \mathsf{Seq} $. 
We settle with  verifying that the translation of  the left  cancellation law is 
 a  theorem of     $ \mathsf{Seq} $.
Assume $ f \circ^{ \tau }  g = f  \circ^{ \tau }  h $. 
Since $ e$ is an identity element, we may assume  that  $ f, g, h  \neq  e  $. 
In particular, $  f : [0, n ]  \to  \mathcal{J} $ for some $ n \in \mathsf{N}^{ \star } $. 
Then, $ f^{ \frown } g = f ^{ \frown } h $. 
By Clause (VI) of the preceding theorem, we have one of the following cases: 
(i)  $ g= h $, 
(ii) there exists an indexed sequence  $p :  [0,  \ell   ]  \to  \mathcal{J} $ such that $ f = f^{ \frown } p $ and $ p ^{ \frown } h = g $, 
(iii)  there exists an indexed sequence  $p :  [0,  \ell   ]  \to  \mathcal{J} $  such that $ f = f^{ \frown } p $ and $ p ^{ \frown } g = h $.   
In case of (ii) and (iii), we get  $ n = n+ \ell + 1 $. 
Since addition on $  \mathsf{N}^{ \star } $ is cancellative and $0$ is an additive identity, we get $ 0 = \ell + 1 $, 
 which contradicts the fact that $0$ is not in the range of $  \mathrm{S} $. 
Thus,  $ g = h $.

\begin{theorem}
$  \mathsf{Seq} $ interprets $  \mathsf{Seq}^+ $. 
\end{theorem}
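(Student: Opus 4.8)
The plan is to verify that $\mathsf{Seq} \vdash \phi^{\tau}$ holds for each axiom $\phi$ of $\mathsf{Seq}^+$, where $\tau$ is the interpretation specified above. Since $e^{\tau} = e$ lies in the domain and every domain element lies in $\mathcal{J}$ (indexed sequences belong to $\mathcal{S}_0 \subseteq \mathcal{J}$, and $e \in \mathcal{J}$), all the constant sequences $\frac{y}{[0,0]}$ appearing in the definition of $\vdash^{\tau}$ are well defined by Clause (I) of Theorem~\ref{MainTheoremIndexedSequences}. The left cancellation half of $\mathsf{Seq}^+_c$ has already been checked above, and I would treat each remaining axiom in the same spirit: each is a transcription of one of the clauses of Theorem~\ref{MainTheoremIndexedSequences}, once the degenerate role of $e$ is accounted for.

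First I would dispose of the axioms that are almost immediate. The identity laws $\mathsf{Seq}_3$ and $\mathsf{Seq}^*_3$ hold by the very definition of $\circ^{\tau}$, which sets $f \circ^{\tau} e = f$ and $e \circ^{\tau} f = f$. For $\mathsf{Seq}_1$ I would observe that $x \vdash^{\tau} y$ is either $\frac{y}{[0,0]}$ or $f ^{ \frown } \frac{y}{[0,0]}$, hence in either case an indexed sequence, so of the form $(\llf{s}{(k,u)})t$; such an element cannot be $e$, since otherwise $\llf{s}{(k,u)} \sqsubseteq_A e$ would force $\llf{s}{(k,u)} = e$, contradicting $\mathsf{Seq}_1$. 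For $\mathsf{Seq}_5$ I would decompose a nonempty domain element $f : [0,n] \to \mathcal{J}$ into $f = g \vdash^{\tau} f(n)$: if $n = 0$ then $f = \frac{f(0)}{[0,0]} = e \vdash^{\tau} f(0)$, and if $n > 0$ then, taking $g$ to be the restriction of $f$ to $[0, n \dotdiv 1]$ supplied by the restriction lemma, we have $f = g ^{ \frown } \frac{f(n)}{[0,0]} = g \vdash^{\tau} f(n)$.

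Next I would turn to the structural axioms. For injectivity of $\vdash^{\tau}$ ($\mathsf{Seq}_2$) I would start from $x_1 \vdash^{\tau} x_2 = y_1 \vdash^{\tau} y_2$, note that the two sides share a common domain so that the underlying $x_1$ and $y_1$ have equal domains, and then apply Clause (VI): its second and third alternatives are excluded on length grounds, leaving $x_1 = y_1$ and $\frac{x_2}{[0,0]} = \frac{y_2}{[0,0]}$, whence $x_2 = y_2$ by reading off the value at index $0$. For $\mathsf{Seq}_4$, the translation of $\llapp{x}{(\llf{y}{z})} = \llf{(\llapp{x}{y})}{z}$ unfolds to $x ^{ \frown } (y ^{ \frown } \frac{z}{[0,0]}) = (x ^{ \frown } y) ^{ \frown } \frac{z}{[0,0]}$, which is an instance of associativity of $^{ \frown }$ (Clause (V)); the cases where $x$ or $y$ is $e$ follow from the identity conventions.

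The substantive work lies in the editor axiom $\mathsf{Seq}^*_5$ and the right half of $\mathsf{Seq}^+_c$. The right-to-left direction of $\mathsf{Seq}^*_5$ is once more associativity of $^{ \frown }$ (Clause (V)), whereas its left-to-right direction is exactly Clause (VI): rewriting the hypothesis as $x ^{ \frown } y = z ^{ \frown } w$, the three alternatives of Clause (VI) produce the witness $u$ required by the editor axiom, namely $u = e$ in the equality alternative and $u = h$ in each nontrivial alternative. The right cancellation law $y x = z x \to y = z$ I would then obtain by repeating the argument already given for left cancellation, reading Clause (VI) from the other side. The hard part, and the step I expect to be the main obstacle, is the uniform handling of the empty sequence: because $e$ is adjoined to the domain as an identity and is not itself an indexed sequence with a domain in $\mathsf{N}^{\star}$, every appeal to Clause (VI) must be guarded by a case split on which of the four arguments equal $e$, and in the boundary cases the editor witness $u$ has to be taken as $e$ rather than a genuine indexed sequence. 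Once this case analysis is set up, all axioms of $\mathsf{Seq}^+$ hold under $\tau$, which establishes that $\mathsf{Seq}$ interprets $\mathsf{Seq}^+$.
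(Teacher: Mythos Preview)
Your proposal is correct and follows the same approach as the paper: verify the axioms of $\mathsf{Seq}^+$ under the interpretation $\tau$ by invoking the clauses of Theorem~\ref{MainTheoremIndexedSequences}. The paper is far more terse, saying only that the result ``follows from the preceding theorem'' and spelling out just the left cancellation law; your axiom-by-axiom verification, including the explicit attention to the $e$ case-splits, is exactly what the paper leaves to the reader.
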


From now on, we work in  $  \mathsf{Seq}^+ $  and simply express  indexed sequences as $ f : [0, n ]  \to   \mathcal{V} $
and implicitly assume  $ n \in \mathsf{N}^{ \star } $
 ($ \mathcal{V} $ denotes the universe of an arbitrary model of  $  \mathsf{Seq}^+ $).

\section{$\fintnavn$ interprets the tree theory $\mathsf{T}$ }

\label{gronnspisser}

\begin{figure}[t] 
$$\begin{array}{lll}
 \mathsf{T_1} & \; & \forall x y [  \  \txllf{x}{y} \neq \txllc   \   ]  \\
 \mathsf{T_2}   & \; &  \forall x_1 x_2 y_1 y_2 [  \  \txllf{x_1}{x_2} = \txllf{y_1}{y_2} \; \rightarrow \; (  \  x_1 =y_1  \, \wedge \, x_2 = y_2 \  )       \  ]  \\
 \mathsf{T_3}   & \; & \forall x  [  \  x \txllr  \txllc \; \leftrightarrow \; x = \txllc \   ] \\
 \mathsf{T_4}   & \; & \forall x y z   [  \  x \txllr  \txllf{y}{z} \; \leftrightarrow \;
 (  \  x= \txllf{y}{z} \, \vee \, x \txllr  y \, \vee  \,  x \txllr  z \  )  \     ]\; .
\end{array}
$$
\caption{The axioms of $\mathsf{T}$ 
\label{figab}}
\end{figure}

Recall the theory $\mathsf{T}$  
over the language $ \mathcal{L}_{ \mathsf{T} } = \lbrace \txllc,  \txllf{\cdot}{\cdot} , \txllr   \rbrace $ 
where $\txllc$ is a constant symbol, $\txllf{\cdot}{\cdot}$ is a binary function symbol and $\txllr$ is a binary relation symbol. 
The axioms of $\mathsf{T}$   is given Figure \ref{figab}, and
the intended model   is a term model: 
The universe is the set of all variable-free $\mathcal{L}_{ \mathsf{T} } $-terms. 
Each term is interpreted as itself, and 
 $\sqsubseteq$ is interpreted as the subterm relation ($s $ is a subterm of $t$ iff $ s = t $ or $ t = \txllf{t_1}{t_2}$ and $s$ is a subterm of $t_1 $ or $t_2$).  This term model is obviously isomorphic to a model where the universe consists of (full) binary trees
 and where $\sqsubseteq$ is interpreted as the subtree relation. 
 We will interpret  $\mathsf{T}$ in $\fintnavn$.

\subsection{The ideas}

Variable-free $\mathcal{L}_{ \mathsf{T} }$-terms will be called {\em finite binary trees}, or just {\em binary trees},
or just {\em  trees}.
Let $\{a,b\}^*$ denote the set of all finite strings over the alphabet $\{a,b\}$. We define the set $\polish\subseteq \{a,b\}^*$
inductively by
\begin{itemize}
\item the string $a$ is in $\polish$
\item the string $b\alpha\beta$ is  in  $\polish$ if  $\alpha$ and $\beta$ are strings in $\polish$.
\end{itemize}

Note that strings in $\polish$ can be viewed as the finite binary trees written in Polish notation. 
For any $\alpha \in \{a,b\}^*$, let $(\#_a  \, \alpha)$ and $(\#_b \, \alpha)$ denote,  respectively, the number of occurrences of the alphabet symbols $a$ and $b$ in $\alpha$.
The  next lemma is very well known.

\begin{lemma} \label{mistetpaagulv}
$\alpha\in \polish$ if and only if
\begin{enumerate}
\item $(\#_b  \, \alpha) + 1 = (\#_a  \, \alpha )$ (and thus $(\#_a  \, \alpha) > (\#_b  \, \alpha)$), and
\item if $\beta$ is a strict prefix of   $\alpha$, then 
$(\#_a  \, \beta) \leq (\#_b  \, \beta)$.
\end{enumerate}
\end{lemma}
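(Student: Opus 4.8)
The plan is to prove both directions by relating membership in $\polish$ to the two counting conditions, using induction on string length (or structure) as appropriate. For the forward direction I would argue by structural induction on the inductive definition of $\polish$. The base case is $\alpha = a$: here $\#_a \alpha = 1$ and $\#_b \alpha = 0$, so condition (1) holds, and the only strict prefix is the empty string $\beta$, for which $\#_a \beta = \#_b \beta = 0$, so condition (2) holds. For the inductive step, suppose $\alpha = b\gamma\delta$ with $\gamma, \delta \in \polish$ satisfying (1) and (2). Then $\#_a \alpha = \#_a \gamma + \#_a \delta$ and $\#_b \alpha = 1 + \#_b \gamma + \#_b \delta$, and a short calculation with the two inductive instances of (1) gives $\#_a \alpha = \#_b \alpha + 1$. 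Checking (2) requires looking at the possible forms of a strict prefix $\beta$ of $b\gamma\delta$: either $\beta$ is empty, or $\beta = b\beta'$ with $\beta'$ a prefix of $\gamma$, or $\beta = b\gamma\delta'$ with $\delta'$ a strict prefix of $\delta$. In each case the prefix condition follows from the inductive hypotheses together with condition (1) applied to $\gamma$.

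For the converse I would induct on the length of $\alpha$, assuming $\alpha$ satisfies (1) and (2) and deriving $\alpha \in \polish$. Since $\#_a \alpha > \#_b \alpha$ by (1), $\alpha$ is nonempty. If $\alpha$ begins with $a$, I claim it must equal $a$: otherwise write $\alpha = a\alpha'$ and apply (2) to a suitable prefix to force $\alpha' $ empty (a single leading $a$ already attains $\#_a > \#_b$, and any further symbol would violate either (1) or (2)). If $\alpha$ begins with $b$, write $\alpha = b\eta$; then I would split $\eta$ at the first position where the running count of $a$'s exceeds the running count of $b$'s, yielding a factorization $\alpha = b\gamma\delta$ where $\gamma$ is the shortest prefix of $\eta$ with $\#_a \gamma = \#_b \gamma + 1$. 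The point is that $\gamma$ and $\delta$ then each satisfy (1) and (2), so by the induction hypothesis both lie in $\polish$, whence $\alpha = b\gamma\delta \in \polish$.

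The main obstacle is the converse, specifically showing that the factorization $\alpha = b\gamma\delta$ produced by the ``first balance point'' argument actually yields two strings each satisfying both counting conditions. That $\gamma$ satisfies (1) is immediate from its defining property; that $\gamma$ satisfies (2) follows because any strict prefix of $\gamma$ fails to reach balance $+1$ by minimality of $\gamma$, combined with the prefix condition (2) inherited from $\alpha$ (shifted by the leading $b$). The delicate bookkeeping is for $\delta$: one uses $\#_a \alpha = \#_b \alpha + 1$ and the established count for $\gamma$ to deduce $\#_a \delta = \#_b \delta + 1$, and one transfers condition (2) from $\alpha$ to $\delta$ by observing that a strict prefix $\delta'$ of $\delta$ corresponds to the prefix $b\gamma\delta'$ of $\alpha$, where the contribution of the leading $b$ and of the balanced-plus-one block $\gamma$ cancels correctly. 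Once these index computations are carried out the induction closes.

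Since this is a standard Catalan/Dyck-word characterization, an alternative would be to track the single integer-valued function $h(\beta) = \#_a \beta - \#_b \beta$ along prefixes and phrase everything in terms of $h$: condition (1) says $h(\alpha) = 1$ and condition (2) says $h(\beta) \le 0$ for strict prefixes $\beta$. This reformulation makes the ``first time $h$ reaches $1$'' cut point transparent and is probably the cleanest way to organize the converse, but it is entirely routine and I would not belabor the arithmetic.
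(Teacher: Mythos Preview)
Your proposal is correct; the paper itself does not give a proof of this lemma, stating only that it is ``very well known,'' so there is nothing to compare against. Your argument is the standard one and the details you sketch all go through: in particular, your treatment of the case where $\alpha$ begins with $a$ can be made precise simply by noting that if $|\alpha|>1$ then the single-symbol prefix $a$ already violates condition~(2), and your ``first balance point'' factorization for the $b$-case works because the running height $h$ changes by $\pm 1$ per symbol, starts at $0$, and ends at $2$ on $\eta$, so it must hit $1$ before it can hit $2$.
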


The characterisation
given by Lemma \ref{mistetpaagulv} can be used to prove unique readability of Polish notation for binary trees:
If $\alpha\in \polish$, then either $\alpha=a$, or there exist unique $\beta,\gamma\in \polish$ such that $\alpha=b\beta\gamma$.

Finite sequences over the natural numbers of a certain form will be called {\em snakes}.

\begin{definition}
The sequence $x_1, \ldots, x_n$  is a {\em snake} if $n=1$ and $x_1 = x_n = 0$.
The sequence $x_1, \ldots, x_n$  is a {\em snake} if 
\begin{itemize}
\item $x_1= 2$ and $x_n= 0$ (and thus $n>1$)
 \item $ x_i >0 $ for  $i=1, \ldots , n-1$   
\item $\vert x_i - x_{i+1}\vert = 1$ for $i=1, \ldots , n-1$.
\end{itemize}
\end{definition}

\paragraph{Examples:} The sequence $2,1,0$ is a snake. The sequence $2,1,2,1,2,1,0$ is a snake
and so is $2,1,2,3,4,5,4,3,2,1,2,3,4,3,2,1,0$. \qed

We will represent binary trees by snakes. Each tree will be represented by a unique snake (and each snake will represent some tree).
For any binary tree $t$, let $\widehat{t}$ denote the corresponding string in $\polish$, that is, $\widehat{\txllc}=a$
and $\widehat{\txllf{t_1}{t_2}}=b\widehat{t_1}\widehat{t_2}$. The binary tree $t$ will
be represented by the  snake $\tilde{t}$ given by
\begin{itemize}
 \item if $\widehat{t}=a$, let $\tilde{t}=0$ (that is, $\tilde{t}$
is a sequence of length one, and  0 is the one and only element of the sequence)
\item if  $\widehat{t}= \alpha_1 \alpha_2 \ldots \alpha_n $ where $\alpha_i\in \{ a,b\}$, let $\tilde{t}=(x_1, \ldots x_n)$ where
\begin{align} \label{calcium}
x_1=2 \;\;\; \mbox{  and } \;\;\;
x_{i+1} \; =  \; \begin{cases}
 x_i - 1& \mbox{if $\alpha_{i+1}=a$}  \\
  x_i + 1& \mbox{if $\alpha_{i+1}=b$.}  
\end{cases} 
\end{align}
for $i=1,\ldots, n-1$. 
\end{itemize}

Let us check that $\tilde{t}=(x_1, \ldots x_n)$ indeed is a snake. 
Let $\widehat{t}= \alpha_1 \alpha_2 \ldots \alpha_n $where $\alpha_i\in \{ a,b\}$. By (\ref{calcium}), we have
$$
(\#_b \; \alpha_1 \ldots \alpha_i)  \; + \; 1 - \;  (\#_a \; \alpha_1 \ldots \alpha_i) \;\;  =  \;\; x_i
$$
and thus, it follows from Lemma \ref{mistetpaagulv} that $\tilde{t}$ is a snake (for any tree $t$). 
It also follows from our discussion above and Lemma \ref{mistetpaagulv} that 
the mapping $t \mapsto \tilde{t}$ is a bijection between the finite binary trees
and the snakes.

\paragraph{Examples:} The tree $\txllf{\txllc}{ \txllf{\txllc}{\txllc} }$ is uniquely represented by the string
$babaa$ which again is uniquely represented by the snake $2,1,2,1,0$. The tree 
$\txllf{\txllf{\txllc}{ \txllf{\txllc}{\txllc} }}{\txllc}$ is uniquely represented by the string
$bbabaaa$ which again is uniquely represented by the snake $2,3,2,3,2,1,0$. \qed

\begin{definition}
For any snakes $(x_1, \ldots x_n)$ and  $(y_1, \ldots y_m)$, let
$$
(x_1, \ldots , x_n) \oplus (y_1, \ldots , y_m) \; = \; (2, x_1 + 1, x_2 + 1 , \ldots , x_n + 1, y_1, y_2, \ldots y_m)\; .
$$
\end{definition}

The proof of the next lemmas are straightforward.

\begin{lemma} \label{dddddd} 
$(x_1, \ldots , x_n) \oplus (y_1, \ldots , y_m)$ is a snake if $(x_1, \ldots x_n)$ and  $(y_1, \ldots y_m)$ are snakes.
\end{lemma}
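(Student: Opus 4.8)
The plan is to verify directly that the sequence
$$z \;=\; (x_1, \ldots, x_n) \oplus (y_1, \ldots, y_m) \;=\; (2,\, x_1+1,\, \ldots,\, x_n+1,\, y_1,\, \ldots,\, y_m)$$
satisfies the three defining conditions of a snake in the second (non-trivial) case of the definition, which applies because the length $n+m+1$ exceeds $1$ whenever both factors are snakes. I would index the entries of $z$ as $z_1 = 2$, then $z_{1+i} = x_i + 1$ for $1 \le i \le n$, and finally $z_{n+1+j} = y_j$ for $1 \le j \le m$, so that the whole argument reduces to bookkeeping on these three blocks.

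First I would record the single structural fact that does all the work: the last entry of any snake is $0$, so in particular $x_n = 0$ and $y_m = 0$, while the first entry is $2$ when the snake has length greater than $1$ and is $0$ when it has length $1$; in either case the first entry lies in $\{0,2\}$. With this in hand the boundary condition is immediate, since $z$ begins with $2$ and ends with $y_m = 0$.

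Next I would check positivity of every entry except the last. The leading $2$ is positive; each $x_i + 1 \ge 1$ because the entries of a snake are non-negative; and each $y_j$ with $j < m$ is positive by the snake property, while the only vanishing entry $y_m$ occupies the last position, where $0$ is permitted.

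The only step requiring any care --- and it is the crux --- is the unit-step condition $\vert z_k - z_{k+1}\vert = 1$ at the two seams between the blocks, since inside the shifted $x$-block and inside the $y$-block these differences are inherited unchanged from the factors. At the first seam, $z_2 - z_1 = (x_1 + 1) - 2 = x_1 - 1$, which is $\pm 1$ because $x_1 \in \{0,2\}$; at the second seam, $z_{n+2} - z_{n+1} = y_1 - (x_n + 1) = y_1 - 1$, using $x_n = 0$, and this is again $\pm 1$ because $y_1 \in \{0,2\}$. Assembling these observations shows that $z$ is a snake. I expect no genuine obstacle here: the argument is a finite case check, and it is precisely the fact that every snake ends in $0$ (forcing $x_n + 1 = 1$) that makes the two seams fit together.
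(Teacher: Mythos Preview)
Your proof is correct and is precisely the direct verification the paper has in mind: the paper states that the proof is straightforward and gives no further argument, so your careful case analysis of the three snake conditions (endpoints, positivity, unit steps at the two seams) fills in exactly the omitted details.
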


\begin{lemma} \label{deeppurple} 
For any trees $t_1$ and $t_2$, we have
$$
\widetilde{\txllf{t_1}{t_2}} \; = \; \widetilde{t_1} \oplus \widetilde{t_2}\; . 
$$
\end{lemma}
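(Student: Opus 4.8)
The plan is to avoid induction and instead exploit the explicit ``height'' description of a snake recorded just before the lemma. For any tree $t$ with $\widehat{t}=\alpha_1\cdots\alpha_n$, the entries of $\widetilde{t}=(x_1,\dots,x_n)$ satisfy $x_i=(\#_b\,\alpha_1\cdots\alpha_i)+1-(\#_a\,\alpha_1\cdots\alpha_i)$. For a composite tree this is exactly the identity derived from (\ref{calcium}); the only extra point to check is the leaf case $t=\txllc$, where $\widehat{t}=a$ and $\widetilde{t}=(0)$, and here the formula also holds since $0+1-1=0$. Thus the height formula is valid uniformly, and it is all I would use.

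First I would fix notation: write $\widehat{t_1}=\alpha_1\cdots\alpha_n$ with $\widetilde{t_1}=(x_1,\dots,x_n)$ and $\widehat{t_2}=\beta_1\cdots\beta_m$ with $\widetilde{t_2}=(y_1,\dots,y_m)$. Since $\widehat{\txllf{t_1}{t_2}}=b\,\alpha_1\cdots\alpha_n\,\beta_1\cdots\beta_m$ has length $1+n+m$, the snake $\widetilde{\txllf{t_1}{t_2}}=(z_1,\dots,z_{1+n+m})$ has the same length as $\widetilde{t_1}\oplus\widetilde{t_2}$, so it suffices to compare entries. I would then split the index range into three blocks corresponding to the leading $b$, the copy of $\widehat{t_1}$, and the copy of $\widehat{t_2}$, and evaluate $z_k$ in each block using the height formula.

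The computation is routine counting. For the leading symbol, $z_1=1+1-0=2$. For an index $1+i$ with $1\le i\le n$, the prefix is $b\,\alpha_1\cdots\alpha_i$, so the $b$-count gains one relative to the corresponding prefix of $\widehat{t_1}$, and hence $z_{1+i}=x_i+1$. For an index $1+n+j$ with $1\le j\le m$, the prefix is $b\,\widehat{t_1}\,\beta_1\cdots\beta_j$; here the contribution of the whole $\widehat{t_1}$ block together with the leading $b$ cancels, because Lemma \ref{mistetpaagulv} gives $(\#_a\,\widehat{t_1})=(\#_b\,\widehat{t_1})+1$, leaving exactly $z_{1+n+j}=y_j$. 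Assembling the blocks yields $\widetilde{\txllf{t_1}{t_2}}=(2,x_1+1,\dots,x_n+1,y_1,\dots,y_m)$, which is precisely $\widetilde{t_1}\oplus\widetilde{t_2}$ by definition.

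The only place demanding care, and the one I would flag as the crux, is the third block, where the exact cancellation $(\#_b\,\widehat{t_1})+1-(\#_a\,\widehat{t_1})=0$ must be invoked so that appending $\widehat{t_2}$ resets the height to the raw values of $\widetilde{t_2}$ rather than to shifted ones. This is exactly the balance condition of Lemma \ref{mistetpaagulv}, and it is what makes the asymmetry between the shifted first block and the unshifted second block in the definition of $\oplus$ come out correctly.
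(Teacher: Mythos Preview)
Your argument is correct: the height identity $x_i=(\#_b\,\alpha_1\cdots\alpha_i)+1-(\#_a\,\alpha_1\cdots\alpha_i)$, which the paper itself derives from (\ref{calcium}), reduces the lemma to a three-block count, and your use of Lemma~\ref{mistetpaagulv} to cancel the $\widehat{t_1}$ contribution in the third block is exactly the right point. The paper omits the proof entirely, declaring it straightforward, so your write-up supplies precisely the kind of verification the authors had in mind; there is nothing to compare against beyond that.
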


\begin{definition}
A snake $(x_1, \ldots x_n)$  is {\em isomorphic  with a strict part}  of the snake $(y_1, \ldots y_m)$ if
there exists natural numbers $k,\ell$ such that
\begin{itemize}
\item $y_{k+1} = y_k + 1$
\item $y_{k+i} = x_i + \ell$ for $i=1,\ldots , n$
\item $y_{k+n} + 2 = y_{k+1}$.
\end{itemize}
The relation $(x_1, \ldots x_n)R(y_1, \ldots y_m)$ holds if and only if  $(x_1, \ldots x_n)= (0)$ or
$(x_1, \ldots x_n) = (y_1, \ldots y_m)$ or $(x_1, \ldots x_n)$  is  isomorphic  with a strict part $(y_1, \ldots y_m)$.
\end{definition}

\begin{lemma} \label{nazareth}   
For any trees $t_1$ and $t_2$, we have
$$
t_1 \txllr t_2 \;\; \Leftrightarrow \;\; \widetilde{t_1} R\widetilde{t_2}\; . 
$$
\end{lemma}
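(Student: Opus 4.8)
The plan is to translate the combinatorial relation $R$ on snakes back into the language of Polish notation, where the subterm relation $\txllr$ is transparent, and thereby reduce the lemma to the unique-readability properties of $\polish$ recorded in Lemma \ref{mistetpaagulv}. The crucial observation is that the snake values carry exactly the same information as the prefix balances of the Polish code: writing $\widehat{t} = \alpha_1 \cdots \alpha_m$ with $m>1$ and setting $h_i := (\#_b \, \alpha_1\cdots\alpha_i) - (\#_a \, \alpha_1\cdots\alpha_i)$, the computation following (\ref{calcium}) shows that the $i$-th entry of $\widetilde{t}$ is $h_i + 1$. I would first dispose of the case $t_1 = \txllc$: here $\widetilde{t_1} = (0)$, the first disjunct in the definition of $R$ holds for every $\widetilde{t_2}$, and correspondingly $\txllc \txllr t_2$ holds for every tree $t_2$, so both sides are vacuously true. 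It then remains to treat non-leaf $t_1$, for which $\widetilde{t_1} \neq (0)$ kills the first disjunct, so that $\widetilde{t_1} R \widetilde{t_2}$ reduces to ``$\widetilde{t_1} = \widetilde{t_2}$ or $\widetilde{t_1}$ is isomorphic with a strict part of $\widetilde{t_2}$''.

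The heart of the argument is a clean dictionary between substrings and subterms. In the height reformulation, a block $\alpha_{k+1}\cdots\alpha_{k+n}$ of $\widehat{t_2}$ lies in $\polish$ precisely when $h_{k+n} = h_k - 1$ and $h_{k+i} \geq h_k$ for $0 \leq i < n$; since heights move by $\pm 1$, such a $k+n$ is the \emph{first} return of the path to level $h_k - 1$, and by the prefix clause of Lemma \ref{mistetpaagulv} (which forbids any well-formed word from being a proper prefix of another) this block is exactly the code of the subterm of $t_2$ rooted at position $k+1$. Hence $t_1 \txllr t_2$ holds iff $\widehat{t_1}$ occurs as a contiguous substring of $\widehat{t_2}$, the occurrence at position $1$ being $t_1 = t_2$ and the occurrences at positions $\geq 2$ being the proper subterms. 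I would then match this against the definition of ``isomorphic with a strict part'': writing $\widetilde{t_2} = (s_1,\ldots,s_m)$ and $\widetilde{t_1} = (x_1,\ldots,x_n)$, the conditions $s_{k+i} = x_i + \ell$ say exactly that the block at position $k+1$ has the same sequence of $\pm 1$ steps as $\widetilde{t_1}$ and hence spells $\widehat{t_1}$; the condition $s_{k+1} = s_k + 1$ forces the step into position $k+1$ to be a $b$ and, crucially, forces $k \geq 1$, so the occurrence starts at a position $\geq 2$; and the remaining condition $s_{k+n}+2 = s_{k+1}$ is automatic since $\widetilde{t_1}$ runs from $2$ down to $0$. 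Thus, for non-leaf $t_1$, ``$\widetilde{t_1}$ is isomorphic with a strict part of $\widetilde{t_2}$'' is equivalent to ``$\widehat{t_1}$ occurs as a proper substring of $\widehat{t_2}$'', i.e.\ to ``$t_1 \txllr t_2$ and $t_1 \neq t_2$''.

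Assembling the pieces, for non-leaf $t_1$ we obtain $\widetilde{t_1} R \widetilde{t_2}$ iff $\widetilde{t_1} = \widetilde{t_2}$ or $\widehat{t_1}$ occurs properly, iff $t_1 = t_2$ or ($t_1 \txllr t_2$ with $t_1 \neq t_2$), iff $t_1 \txllr t_2$; here $\widetilde{t_1} = \widetilde{t_2} \Leftrightarrow t_1 = t_2$ is just injectivity of the bijection $t \mapsto \widetilde{t}$. The hard part will be the dictionary step, namely proving that an arbitrary occurrence of the well-formed word $\widehat{t_1}$ inside $\widehat{t_2}$ is necessarily \emph{aligned} with a subterm boundary rather than straddling two subterms; this is exactly where unique readability of Polish notation is needed, and it is supplied by the prefix clause of Lemma \ref{mistetpaagulv} together with the single-step nature of the heights. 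Everything else --- the case analysis, the bookkeeping identifying the step pattern of a block with the snake of $t_1$, and the verification that $s_{k+n}+2 = s_{k+1}$ is redundant --- is routine. One could alternatively argue by induction on $t_2$ using $\widetilde{\txllf{u}{v}} = \widetilde{u} \oplus \widetilde{v}$ from Lemma \ref{deeppurple}, reducing to the claim that an arc in $\widetilde{u} \oplus \widetilde{v}$ lies entirely in the elevated copy of $\widetilde{u}$ or entirely in the copy of $\widetilde{v}$; but the non-straddling step there needs the same balance argument, so the direct route above seems cleaner.
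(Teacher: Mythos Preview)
The paper does not actually supply a proof of Lemma~\ref{nazareth}: it is grouped together with Lemmas~\ref{dddddd} and~\ref{deeppurple} under the blanket remark ``The proof of the next lemmas are straightforward'' and no argument is given. Your proposal therefore cannot be compared against a written proof, but it does correctly fill in the details the paper omits, and your approach is sound. The translation of snake values into prefix balances $h_i$, the observation that at most one $\polish$-word can start at any given position (from the prefix clause of Lemma~\ref{mistetpaagulv}), the handling of the leaf case, and the verification that the three clauses in ``isomorphic with a strict part'' match an occurrence of $\widehat{t_1}$ at a position $\geq 2$ are all correct; the redundancy of the third clause given $x_1=2$, $x_n=0$ is also right.

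One small point you leave implicit is that \emph{every} position $k+1$ in $\widehat{t_2}$ is the start of the code of some subterm of $t_2$; you invoke ``the subterm of $t_2$ rooted at position $k{+}1$'' without saying why it exists. This is of course the standard position/node bijection for Polish notation and follows by an easy induction on $t_2$, but since you flag the non-straddling issue as ``the hard part'', it would be worth stating this explicitly: once you know each position starts exactly one $\polish$-subword and that subword is the code of a subterm, the non-straddling worry evaporates.

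As to the intended route: given that the paper places Lemma~\ref{deeppurple} immediately before this lemma, the authors most likely had in mind the inductive argument via $\widetilde{\txllf{u}{v}} = \widetilde{u}\oplus\widetilde{v}$ that you sketch at the end as an alternative. Your direct height-balance argument is a legitimate and arguably cleaner substitute; the two approaches differ only in packaging, since (as you note) the inductive version must still use the balance characterisation to rule out an arc in $\widetilde{u}\oplus\widetilde{v}$ that straddles the join.
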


\subsection{Construction}

Recall that we are  working  in $  \mathsf{Seq}^+ $ and   with  $  \mathsf{N}^{  \star } $, that is, 
we consider indexed  sequences  $ f: [m, n ]  \to \mathcal{V}  $    where $ n \in \mathsf{N}^{ \star }  $. 
We use these indexed  sequences to construct an interpretation of   the tree theory $  \mathsf{T} $. 
Let   $  \mathcal{S} $ consist of all indexed  sequences $ f: [0, n ]   \to  \mathcal{V} $ 
 such that  $ n \in \mathsf{N}^{ \star } $, $ f(j) \in  \mathsf{N}^{ \star }   $  for all $  j \leq n $  and one of the following holds 
 \begin{itemize}

\item   $n= 0$ and $ f(0) = 0 $

\item  $ f(0) = 2 $, $ f(n) = 0 $,   $ f(j) \neq 0 $ for all $ j < n $ and for each  $ i < n $ we have 
$ f(i) = f(i+1) +1 $ or $ f(i+1) = f(i) +1 $.

 \end{itemize}

We define a one-to-one binary operation $ \langle \cdot , \cdot \rangle   :   \mathcal{S} \times  \mathcal{S}    \to   \mathcal{S} $. 
Assume $ f: [0, n ]   \to  \mathcal{V} $  and $ g: [0, m ]   \to  \mathcal{V} $.  
Let (recall the notation of Theorem \ref{MainTheoremIndexedSequences})   
\[
\langle f ,  g  \rangle  :=     \frac{2}{ [0, 0] }    ^{ \frown}     ( f \oplus  \frac{1}{ [0, n] }  )    ^{ \frown}   g
\       .
\]
It is easy to verify that $ \langle f ,  g  \rangle   \in \mathcal{S} $. 
That $ \langle \cdot , \cdot \rangle  $ is a function follows from Theorem \ref{MainTheoremIndexedSequences}. 
We show that  $ \langle \cdot , \cdot \rangle   $ is one-to-one. 
Let $ f^{ \prime } :  [0, n^{ \prime } ]   \to  \mathcal{V} $  and $ g^{ \prime } : [0, m^{ \prime } ]   \to  \mathcal{V} $
be elements of $  \mathcal{S} $. 
Assume $  \langle  f  ,  g    \rangle =   \langle  f^{ \prime }  ,  g^{ \prime }   \rangle  $ and $ f \neq  f^{ \prime }$. 
Then, by the editor axiom for indexed sequences  (Clause (VI) of   Theorem \ref{MainTheoremIndexedSequences}), 
one of $f$ and  $  f^{ \prime }  $ is a proper initial segment of the other. 
Without loss of generality, we may assume $f$ is a proper initial segment of    $  f^{ \prime }  $, 
that is,  that there exists an indexed sequence $ h :  [0,  \ell ]  \to   \mathcal{J} $ such that
  $ f^{ \prime }  = f  ^{  \frown }  h  $.
By definition of $ ^{ \frown} $,  we must have $ n^{ \prime } = n + \ell +1 $. 
Furthermore,    $  0 =  f( n) = f^{ \prime } ( k )  $ for some  $ k < n^{ \prime } $. . 
But this contradicts the fact that  $ f^{ \prime } ( j )  \neq 0 $ for all $ j <  n^{ \prime } $. 
Thus, $   \langle \cdot , \cdot \rangle   $ is one-to-one on  $  \mathcal{S} $.

Assume $ f: [0, n ]   \to  \mathcal{V} $  and $ g: [0, m ]   \to  \mathcal{V} $.  
Let $ f \sqsubseteq  g $ if and only if  one of the following holds: 
(i)  $n = 0 $, 
(ii)  $ f = g $, 
(iii)    there exist $ k,  \ell \in \mathsf{N}^{ \star } $ such that  
$ g(k+1) = g(k) +1 $,  $ g(k+1) = g(k+n) +2 $ and  $ g(k+i + 1 ) = f (i ) + \ell  $ for all $ 0 \leq i \leq n $. 
We show that the following holds
\[
h   \sqsubseteq  \langle  f ,  g   \rangle    \leftrightarrow     (    \   h = \langle  f ,  g   \rangle   \;   \vee  \;   h  \sqsubseteq  f   \;  \vee   \;   h  \sqsubseteq  g     \    )   
\       .
\tag{*}
\]
It is easy to see that the right-left implication holds. 
We show that the left-right implication holds. 
Assume $ h : [0, r ]   \to  \mathcal{V} $. 
By the editor axiom for indexed sequences  (Clause (VI) of   Theorem \ref{MainTheoremIndexedSequences}), 
it suffices to show that  we cannot witness $   h   \sqsubseteq  \langle  f ,  g   \rangle   $
with $ 0  \leq  k  < n+1 < n+r  \leq n+m+2 $. 
Indeed, assume this were the case. 
We have two cases:  $  \ell >0 $  and $  \ell = 0 $. 
We consider the case $  \ell  >  0 $. 
Now, there must exist   $ 0 \leq j < r $ such that   
\[
 1 = f(n) +1  =  \langle  f ,  g   \rangle  (n+1) = h (j ) + \ell 
 \     .
 \]
This  means  that $  \ell = 1 $. 
Hence, $h(j) = 0 $, which contradicts the assumption that $ h(i) \neq 0 $ for all $ i < r $ since $ h \in  \mathcal{S} $. 
Finally, we consider the case $  \ell = 0 $. 
There must exist $ 0 \leq  i \leq n  $ such that 
\[
 2 = h(0) =  \langle  f  ,  g  \rangle  ( k +1  )  = f( k+1) + 1 
 \     .
 \]
Hence, $ f (k+1 ) = 1 $. 
But, we also know, by Clause (iii)  in the  definition of $  \sqsubseteq  $,  that $  \langle  f  ,  g  \rangle  ( k  )   +1 =   \langle  f  ,  g  \rangle  ( k +1  )  $. 
Hence, $ f(k) = 0 $,  which contradicts the assumption that $ f (j ) \neq 0 $ for all $ j < n $ since $ f \in  \mathcal{S} $. 
Thus, (*) holds.

It is easy to see that $  \perp  := \frac{0}{ [0,0] }    \in  \mathcal{S} $ is not in the range of $  \langle \cdot  , \cdot  \rangle  $ and that 
$ h \sqsubseteq   \perp   $  if and only if $ h =  \perp $. 
We  have thus  constructed an interpretation of the tree theory $  \mathsf{T} $ in $  \mathsf{Seq}^+ $.

\begin{theorem}

$  \mathsf{T} $ is interpretable in $  \mathsf{Seq}^+ $, and thus also in $  \mathsf{Seq} $. 

\end{theorem}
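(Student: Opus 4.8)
The plan is to package the data already assembled above into a formal interpretation and then verify that each of the four axioms of $\mathsf{T}$ becomes a theorem of $\mathsf{Seq}^+$. Concretely, I would take the relativising domain to be the class $\mathcal{S}$ of snakes, interpret the constant $\txllc$ as $\perp := \frac{0}{[0,0]}$, interpret the pairing symbol $\txllf{\cdot}{\cdot}$ as the operation $\langle \cdot , \cdot \rangle$, and interpret $\txllr$ as the relation $\sqsubseteq$ defined on indexed sequences. Since it was checked that $\perp \in \mathcal{S}$ and that $\langle f, g \rangle \in \mathcal{S}$ whenever $f, g \in \mathcal{S}$, this data genuinely specifies an $\mathcal{L}_{\mathsf{T}}$-structure inside an arbitrary model of $\mathsf{Seq}^+$, so it remains only to discharge the axioms.

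With the interpretation fixed, the four axioms reduce to the four facts established during the construction. Axiom $\mathsf{T_1}$, that $\langle f, g \rangle \neq \perp$, is immediate from the observation that $\perp$ is not in the range of $\langle \cdot , \cdot \rangle$: every $\langle f, g \rangle$ takes the value $2$ at index $0$, whereas $\perp$ is the one-element snake with value $0$. Axiom $\mathsf{T_2}$ is exactly the injectivity of $\langle \cdot , \cdot \rangle$, which was obtained from the editor axiom (Clause (VI) of Theorem \ref{MainTheoremIndexedSequences}): if $\langle f, g \rangle = \langle f', g' \rangle$ with $f \neq f'$, then one of $f, f'$ is a proper initial segment of the other, which forces a $0$ into an interior index of the longer snake and contradicts the defining condition of $\mathcal{S}$. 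Axiom $\mathsf{T_3}$ is the remark that $h \sqsubseteq \perp$ holds iff $h = \perp$, and Axiom $\mathsf{T_4}$ is precisely the biconditional~(*).

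The main obstacle is the left-to-right direction of $\mathsf{T_4}$: showing that any witness for $h \sqsubseteq \langle f, g \rangle$ must lie entirely within the $f$-block or the $g$-block and cannot straddle the boundary between them. Here the editor axiom for indexed sequences decomposes $h \sqsubseteq \langle f, g \rangle$ into finitely many alignment cases, and in the genuinely mixed case — where the claimed occurrence of $h$ would overlap both blocks — the arithmetic constraints built into snakes (consecutive entries differing by exactly $1$, and the value $0$ occurring only at the final index) drive a $0$ into an interior position of $f$, $g$, or $h$, a contradiction; this is the case analysis carried out in the verification of~(*). Once all four translated axioms are in hand, the interpretation of $\mathsf{T}$ in $\mathsf{Seq}^+$ is complete, and composing it with the earlier theorem that $\mathsf{Seq}$ interprets $\mathsf{Seq}^+$ yields, by transitivity of interpretability, that $\mathsf{T}$ is interpretable in $\mathsf{Seq}$ as well.
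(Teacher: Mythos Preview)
Your proposal is correct and follows the paper's own approach essentially line for line: the paper likewise takes $\mathcal{S}$ as the relativising domain, $\perp = \frac{0}{[0,0]}$ for $\txllc$, $\langle \cdot,\cdot\rangle$ for $\txllf{\cdot}{\cdot}$, and the defined $\sqsubseteq$ for $\txllr$, and then observes that the facts already established (non-surjectivity onto $\perp$, injectivity via Clause~(VI), the equivalence $h\sqsubseteq\perp\leftrightarrow h=\perp$, and the biconditional~($*$)) discharge $\mathsf{T}_1$--$\mathsf{T}_4$, with interpretability in $\mathsf{Seq}$ following by composition with the interpretation of $\mathsf{Seq}^+$ in $\mathsf{Seq}$.
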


\section{$  \mathsf{Seq} $ iterprets $  \mathsf{AST + EXT} $ }

\label{blankspisser}

Our interpretation of $  \mathsf{AST+ EXT}   $ in $  \mathsf{Seq} $  builds on   the interpretation of  $  \mathsf{T} $    in $  \mathsf{Seq} $.

\subsection{The Ideas}

We have seen that the full binary trees can be uniquely represented by the strings in $\polish$
which again can be  uniquely represented by the snakes.
For convenience we will, in what follows, identify 
the strings in $\polish$
with the snakes and regard, e.g., $babaa$ as notation for the snake $2,1,2,1,0$ (sometimes it is
 convenient to describe a snake by a giving a string from $\polish$).

We define the {\em finite trees} inductively: 
\begin{itemize}
\item $\txllc$ is a tree (the empty tree)
\item $\langle T_1, \ldots T_n \rangle$ is a tree if $T_1, \ldots T_n $ are trees (for any natural number $n\geq1$). 
\end{itemize}

Let
$F(\txllc)=\txllc$ and
$$ F(\langle T_1, \ldots  ,T_n \rangle) \; = \;  
\langle \ \ldots \langle \ \langle \ \langle \ \txllc \ , \ F(T_1) \ \rangle  \ , \  F(T_2) \ \rangle  \ , \  F(T_3) \ \rangle \ldots \ , \  F(T_n) \ \rangle \; .
$$

Then $F$ is a computable bijection between the finite trees and the binary trees (this
bijection is also used in Damnjanovic \cite{damto}).
Thus, we can represent the finite trees by snakes: every snake represents a
finite  tree, and every finite tree is  represented by one, and only 
one, snake. This works as follows:
 a snake of the form
 $$
 b^k a \beta_1\beta_2\ldots \beta_k
 $$
(where $k>0$) represents a tree of the form $\langle T_1, T_2, \ldots T_k\rangle$ where the tree $T_i$ is represented
by  $\beta_i$ (for $i=1,\ldots,k$).

Let us study a few examples. The snake
$$
\;\;\; \;\;\; \;\;\;      2,3,4,3,2,1,0 \;\;\; (bbbaaaa)
$$
represent the tree $\langle 0,0,0 \rangle$.
The snake
$$
\;\;\; \;\;\; \;\;\;  2,1,2,1,0  \;\;\; (babaa)
$$
represents the tree $\langle \langle 0 \rangle \rangle$.
The snake
$$
 \;\;\; \;\;\; \;\;\;  2,3,2, 3,4,5,4,3,2,1, 2,1,2,1,0     \;\;\;       (bbabbbaaaababaa)
$$
represents the tree
$$ \langle \   \langle 0,0,0 \rangle   \ , \ \langle \langle 0 \rangle \rangle \ \rangle \; .$$

A tree of the form $\langle T_1, T_2, T_3, T_4 \rangle$ is represented by a snake of the form
$$
2,3,4,5,4 \underbrace{ \;\;\;\;\;  \ldots \;\;\;\;\; 3}_{\mbox{\small rep. of $T_1$ }},
\underbrace{ \;\;\;\;\;  \ldots \;\;\;\;\; 2}_{\mbox{\small rep. of $T_2$ }},
\underbrace{ \;\;\;\;\;  \ldots \;\;\;\;\; ,1}_{\mbox{\small rep. of $T_3$ }},
\underbrace{ \;\;\;\;\;  \ldots \;\;\;\;\; ,0}_{\mbox{\small rep. of $T_4$ }} \; .
$$
More generally, a tree of the form $\langle T_1, \ldots , T_k \rangle$ (where $k>0$) is represented by a snake of the form
$$
2, \ldots, k+1,k, \underbrace{ \;\;\;\;\;  \ldots \;\;\;\;\; k-1}_{\mbox{\small rep. of $T_1$ }},
\underbrace{ \;\;\;\;\;  \ldots \;\;\;\;\; k-2}_{\mbox{\small rep. of $T_2$ }}, 
\;\;\;\;\;  \ldots \;\;\;\;\; 1,
\underbrace{ \;\;\;\;\;  \ldots \;\;\;\;\; 0}_{\mbox{\small rep. of $T_k$ }} 
$$
where every number preceeding $k-i$ in the sequence
$$\underbrace{ \;\;\;\;\;  \ldots \;\;\;\;\; k-i}_{\mbox{\small rep. of $T_i$ }}$$
is strictly greater than $k-i$ (if $T_i$ is the empty tree, the sequence will just consist of $k-i$).

Next we will see how we can represent hereditarily finite sets by snakes.

Let $\alpha,\beta\in \{a,b\}^*$.We write $\alpha \ll \beta$ if $\alpha$ strictly precede $\beta$ in the lexicographic ordering (any strict ordering of snakes can replace $\ll$ in the next definition).

\begin{definition}
A snake $(x_1,\ldots,x_n)$ is an {\em ordered snake} if
$n=1$ and $x_1=0$ or if $(x_1,\ldots,x_n)$ is of the form
$b^ka\alpha_1\ldots \alpha_k$ with $\alpha_i \ll \alpha_{i+1}$ (for $i=1,\ldots , k-1$).
\end{definition}

We will now define a mapping between the  hereditarily finite sets and the ordered snakes.

We map the empty set to the snake 0. 

Let  $S_1,\ldots S_k$ be hereditarily finite sets where $S_i$ and $S_j$ are
different sets when $i\neq j$. Let $\beta_1, \ldots, \beta_k\in \{a,b\}^*$ respectively, denote the ordered 
snakes mapped to
$S_1,\ldots S_k$. Let $p$ be a permutation of $\{1,\ldots, k\}$ such that 
such that $\beta_{p(1)} \ll \beta_{p(2)} \ll \ldots \ll \beta_{p(k)}$ (such a permutation exists since the sets  $S_1,\ldots S_k$ are all different). We map the set $\{ S_1,\ldots S_k \}$ to the ordered snake given by $b^ka\beta_{p(1)} \ldots \beta_{p(k)}$.

The mapping is a bijection. Thus we can represent each hereditarily  finite set with a unique ordered snake. Moreover,
every ordered snake represents a hereditarily finite set.

\subsection{Construction}

We continue to work in $  \mathsf{Seq}^+ $ and  with  $  \mathsf{N}^{ \star } $. 
We consider indexed  sequences of the form $  f :  [0, n ]  \to  \mathsf{N}^{  \star } $, 
that is,  $ n \in \mathsf{N}^{ \star }  $  and $ f(j) \in  \mathsf{N}^{ \star }  $  for all $ 0 \leq j \leq n $. 
We start by defining a linear ordering on the indexed sequences and a class $  \mathcal{W} \subseteq  \mathcal{S} $ of snakes.

Assume  $  f :  [0, n ]  \to  \mathsf{N}^{  \star } $ and  $  g :  [0, m ]  \to  \mathsf{N}^{  \star } $.
Let $f  \ll  g $ if and only if $ n < m $ or $ n = m $ and there is a least $ k \leq n $ such that 
$ f (j) = g(j) $ for all $ j < k $ and $ f(k) < g(k) $. 
Clearly  $ \ll $  is irreflexive and transitive.  We restrict the class of indexed sequences so that $ \ll $ is a linear order. 
Let  $ J_0  $ consist of all  $ n^{ \prime }  \in  \mathsf{N}^{  \star } $ such that for all  $ n \leq n^{ \prime }  $
 the class of all indexed   sequences $ f : [0, m ]  \to \mathsf{N}^{  \star } $ where $ m \leq n $   is linearly ordered by $ \ll $.
It is easy to check that $J_0 $ is closed under $0 $ and $  \mathrm{S} $ and is downward closed under $  \leq $. 
We restrict $J_0$ to a subclass $J$ that is also closed under $+$. 
Observe that $J$ is also closed under the modified subtraction function $  \dotdiv $.

Let  $  \mathcal{W} $ denote the class of all indexed  sequences  $ f : [0, n ]  \to    \mathsf{N}^{  \star }  $ such that  $ n= 0 $ and $ f(0) = 0 $  or 
 the following holds 
 \begin{itemize}

\item   $ f(0) = 2 $, $ f(n) = 0 $,   $ f(j)  > 0 $ for all $ j < n $ and for each  $ i < n $ we have 
$ f(i) = f(i+1) +1 $ or $ f(i+1) = f(i) +1 $

\item  there exists a least $ 0 \leq k^{ \star }  \leq n $  such that  
 $ f( k^{ \star } )   =  k^{ \star }    $, $ f (j+1) = f (j) +1 $ for all $ 0 \leq j +1  < k^{ \star } $ and 
 $ f ( k^{ \star }  \dotdiv  1 )  =f ( k^{ \star } ) +1 $

 \item for each $  0 \leq  \ell <   k^{ \star } $  there is a least $  0 \leq j   \leq n $ such that $ f(j) =  \ell $

 \item Assume $  0 \leq k^{ \star }   \leq m_0 < m_1  \leq m_2  < m_3  \leq n $ are such that: 
 (1) $ m_0 $ is the least index $j$ such that $ f(j) =  \ell_0 +1  $ for  $ 0 \leq \ell_0 := f(m_0)  <  k^{ \star }  $ 
 and   $ m_1 $ is the least index $j$ such that $ f(j) =  \ell_0   $; 
 (2)  $ m_2 $ is the least index $j$ such that $ f(j) =  \ell_1  +1  $ for  $ 0 \leq \ell_1 := f(m_2)  <  k^{ \star }  $ 
 and   $ m_3 $ is the least index $j$ such that $ f(j) =  \ell_1   $. 
 Then, $  \ell_0 <  \ell_1 $. 
 Furthermore, let  $ g_0 : [ 0 , m_1 - m_0 ]  \to  \mathsf{N}^{  \star } $ and 
 $ g_1 :   [ 0 , m_3 - m_2 ]  \to  \mathsf{N}^{  \star }  $  
 be such that $ f (m_0 + j + 1 ) = g_0 (j ) + \ell_0 $    and  $ f (m_2  + j + 1 ) = g_1 (j ) + \ell_1 $. 
 Then, $ g_0 \ll   g_1 $. 
 
 \end{itemize}
We refer to  the unique index  $  k^{ \star } $ as the \emph{cardinality} of (the set encoded by)  $f$. 
We define a membership relation on $  \mathcal{W} $.
Assume  $  f :  [0, n ]  \to  \mathsf{N}^{  \star } $ and  $  g :  [0, m ]  \to  \mathsf{N}^{  \star } $ are in  $  \mathcal{W} $. 
Let  $ k^{ \star }  $ be the cardinality of $f$. 
Let  $  g \in^{  \star }   f $ if and only if  there exist $ 0 \leq \ell <  k^{  \star }  $  and $  k^{  \star }   \leq  m_0 < m_1   \leq n $  such that 
\begin{itemize}

\item  $ m_0  $ is the least  $  0 \leq j   \leq n $ such that $ f(j) =  \ell  +1 $

\item  $ m_1 $ is the least  $  0 \leq j   \leq n $ such that $ f(j) =  \ell $

\item $ m_1 = m_0 + m + 1 $ and  $ f (  m_0  + j  + 1 )   = g (j )   +  \ell   $ for all $  0 \leq  j \leq   m $.

\end{itemize}
If $ \phi $ is a formula or a set-theoretic notion  in the language $  \lbrace \in  \rbrace $, we write $  \phi^{ \star } $ for its translation.

The  indexed   sequence $ \emptyset^{ \star } :  [0, 0 ]    \to    \mathsf{N}^{  \star }  $ defined my  $ \emptyset^{ \star } (0) = 0 $ is the unique  empty set 
with respect to $  \in^{  \star } $. 
We restrict $ J $ in order to get adjunction and extensionality. 
Let $ K_0  $ consist of all  
 $ n^{ \prime }  \in  J  $ such that for all  $   n   \leq n^{ \prime }  $
\begin{itemize}

\item \textup{(A)}  for any indexed  sequences $ q : [ 0, n  ]  \to \mathsf{N}^{  \star } $, 
$ f : [ 0, k  ]  \to \mathsf{N}^{  \star } $ in $  \mathcal{W} $  and 
$ g : [ 0, m  ]  \to \mathsf{N}^{  \star } $ in $  \mathcal{W} $
such that $ f \sqsubseteq  q $ and   $  g \not\in^{  \star }  f $, 
there exists an indexed sequence $ h : [ 0, k+m+2  ]  \to \mathsf{N}^{  \star } $ in in $  \mathcal{W} $    such that 
$ p \in^{  \star }  h    $ if and only if $ p \in^{  \star } f $ or $ p = g $

\item \textup{(B)}  if $ f : [ 0, n  ]  \to \mathsf{N}^{  \star } $ and  $ g : [ 0, m  ]  \to \mathsf{N}^{  \star } $  are elements of $  \mathcal{W} $ 
that  contain the same elements  with respect to $  \in^{  \star } $, 
then   $ f = g $.

\end{itemize}
It is easy to check that $ K_0 $ contains  $0$ and is downward closed under  $  \leq  $. 
We show that $K_0$ is closed under $  \mathrm{S} $. 
Assume $ n \in K_0 $. 
It suffices to check  that  $n  + 1 $ satisfies (A) and (B). 
We show that $ n  + 1 $ satisfies (A). 
So, consider $ q : [ 0, n+1  ]  \to \mathsf{N}^{  \star } $, 
$ f : [ 0, k  ]  \to \mathsf{N}^{  \star } $ in $  \mathcal{W} $  and 
$ g : [ 0, m  ]  \to \mathsf{N}^{  \star } $ in $  \mathcal{W} $
such that $ f \sqsubseteq  q $ and   $  g \not\in^{  \star }  f $. 
Since $ n \in K_0 $, $  \exists x , y \in    \mathcal{W}   \;  [   \  f  = \langle x, y  \rangle  \    ]   $   
  and  $  \sqsubseteq $ satisfies axiom $  \mathsf{T}_4 $ of  the tree theory $  \mathsf{T} $,
it suffices to consider the case $ q = f $. 
So,   $ f : [ 0, n+1  ]  \to \mathsf{N}^{  \star } $. 
Let $ 0 \leq  n_0  \leq n $ be the least index $j$  such that $ f (j ) = 1 $.
Let  $ f_0 : [ 0 , n_0 -1  ]  \to  \mathsf{N}^{  \star }  $    be such that    $ f(j+1) = f_0 (j) +1 $. 
Let  $ a : [ 0 ,  n - n_0   ]  \to  \mathsf{N}^{  \star }  $    be such that   $ f(j+1) = a  (j)  $. 
If $ a  \ll  g $, then $ h :=  \langle    f, g \rangle $ encodes $  f   \cup^{ \star }     \lbrace g \rbrace^{  \star }    $. 
Otherwise, $ g \ll  a $. 
Since $n_0 \in K_0$ and thus satisfies (A),  there exists $h_0$ that encodes  $  f_0   \cup^{ \star }     \lbrace g \rbrace^{  \star }    $. 
Then,  $ h :=  \langle    h_0 , a \rangle $  encodes $  f   \cup^{ \star }     \lbrace g \rbrace^{  \star }    $.

We show that $ n   + 1 $ satisfies (B). 
Assume $ f : [ 0, n + 1   ]  \to \mathsf{N}^{  \star } $ and  $ g : [ 0, m  ]  \to \mathsf{N}^{  \star } $  are elements of $  \mathcal{W} $ 
that  contain the same elements  with respect to $  \in^{  \star } $. 
Let $ 0 \leq  n_0  \leq n $ be the least index $j$  such that $ f (j ) = 1 $.
Let $ 0 \leq  m_0  \leq n $ be the least index $j$  such that $ g (j ) = 1 $.
Let  $ f_0 : [ 0 , n_0 -1  ]  \to  \mathsf{N}^{  \star }  $  and  $ g_0 : [ 0 , m_0 -1   ]  \to  \mathsf{N}^{  \star }  $ 
be such that 
$ f(j+1) = f_0 (j) +1 $  and $ g (j) = g_0 (j) +1 $. 
Let  $ a : [ 0 ,  n - n_0   ]  \to  \mathsf{N}^{  \star }  $  and  $ b : [ 0 , m- m_0  ]  \to  \mathsf{N}^{  \star }  $ 
be such that 
$ f(j+1) = a  (j)  $  and $ g (j) = p (j)  $. 
Since $ a, b  \in^{ \star }   f  \cap^{  \star }  g $, we must have $ a = b $  since $a$ and $ b $  are comparable with respect to $ \ll  $. 
Furthermore, $ f_0$ and $ g_0$  are elements of $  \mathcal{W} $   that  contain the same elements  with respect to $  \in^{  \star } $. 
Since  $  n_0 -1 \in K_0  $  and thus satisfies (B),  $ f_0 = g_0 $,  which implies $ f = g $
since  $ f = \langle f_0 , a \rangle $  and    $ g = \langle g_0 , b \rangle $.

We restrict $K_0$ to a subclass $K$  that is also closed under $+$. 
Let  $  \mathcal{W}^{  \star } $ consist of all $ f : [ 0, n ]   \to  \mathsf{N}^{  \star } $ in  $  \mathcal{W} $  such that $ n \in  K$.
We interpret $   \mathsf{AST+EXT} $ in $  \mathsf{Seq} $ as follows: 
(i) the domain is  $  \mathcal{W}^{  \star } $, 
(ii) the membership relation is translated as $  \in^{  \star }  $. 
It follows from the definition of $K$ that this defines an interpretation of $  \mathsf{AST+EXT} $  in  $  \mathsf{Seq}^+ $; 
indeed, observe that an element  of $ f \in  \mathcal{W}^{  \star }  $ is a subtree of $ f $ and thus also an element of 
$ \mathcal{W}^{  \star }  $ since $ \mathcal{W}^{  \star }  $ is downward closed under the restriction of $   \sqsubseteq  $  to $ \mathcal{W}  \times  \mathcal{W} $. 
Observe that the proof builds on the interpretation of  $  \mathsf{T} $ in  $  \mathsf{Seq}^+ $.

\begin{theorem}

$ \mathsf{AST+EXT}  $ is interpretable in $  \mathsf{Seq}^+ $, and thus also in $  \mathsf{Seq} $. 

\end{theorem}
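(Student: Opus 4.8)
The plan is to verify that the interpretation fixed at the end of the Construction subsection — domain $\mathcal{W}^{\star}$, with the membership symbol translated by $\in^{\star}$ — satisfies the translations of the three axioms of $\folastext$; that is, to show $\mathsf{Seq}^+ \vdash (\folast_i)^{\star}$ for $i = 1, 2, 3$. Interpretability in $\mathsf{Seq}$ then follows at once by composing with the earlier interpretation of $\mathsf{Seq}^+$ in $\mathsf{Seq}$. Throughout, I would rely on the fact noted just before the theorem, namely that $\mathcal{W}^{\star}$ is downward closed under the subtree relation $\sqsubseteq$ restricted to $\mathcal{W} \times \mathcal{W}$, so that every $\in^{\star}$-member of a domain element is again a domain element; this is what makes the relativised quantifiers behave correctly, and it comes for free from the interpretation of $\mathsf{T}$.

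The empty-set and extensionality axioms require almost no new work. For $(\folast_1)^{\star}$ I would exhibit $\emptyset^{\star} = \frac{0}{[0,0]}$, which lies in $\mathcal{W}^{\star}$ and, as already observed, has no $\in^{\star}$-members. For $(\folast_3)^{\star}$ I would appeal directly to clause (B) in the definition of $K_0$: since the domain length of every element of $\mathcal{W}^{\star}$ lies in $K \subseteq K_0$, any two $f, g \in \mathcal{W}^{\star}$ with the same $\in^{\star}$-members are literally equal. This is precisely where the passage to \emph{ordered} snakes earns its keep — each hereditarily finite set is coded by the unique lexicographically sorted snake, so equal membership forces equal codes.

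The adjunction axiom $(\folast_2)^{\star}$ is the substantive case. Given $f, g \in \mathcal{W}^{\star}$ I must produce $h \in \mathcal{W}^{\star}$ with $p \in^{\star} h \leftrightarrow (p \in^{\star} f \vee p = g)$. If $g \in^{\star} f$, then $h := f$ already works. If $g \not\in^{\star} f$, I would invoke clause (A) of $K_0$ with $q := f$ (so that $f \sqsubseteq q$ holds trivially), which hands me the required $h$ with domain $[0, k+m+2]$. Closure of the domain then follows since $K$ is closed under $+$ and $\mathsf{N}^{\star}$ under $0, \mathrm{S}, +$, so $k+m+2 \in K$ and hence $h \in \mathcal{W}^{\star}$.

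The real content, and the main obstacle, sits inside the inductive proof — carried out in the Construction — that $K_0$ is closed under $\mathrm{S}$, i.e.\ that clause (A) propagates. The delicate point is the recursive insertion of $g$ into $f$ when $g$ is not already the $\ll$-largest immediate subtree: one peels the final block $a$ off $f$, adjoins $g$ to the remainder $f_0$ via the induction hypothesis at the strictly smaller index $n_0$, and reassembles using $\langle \cdot , \cdot \rangle$. What must be rechecked at each step is that the reassembled snake still meets all four defining clauses of $\mathcal{W}$ — the snake conditions, the placement of the cardinality index $k^{\star}$, the coverage of every level below $k^{\star}$, and above all the strict $\ll$-ordering of consecutive immediate subtrees — while the cardinality increases by exactly one. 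Establishing that last ordering clause after reassembly, together with keeping the domain length inside $K$, is where essentially all the difficulty lies; everything else is routine bookkeeping inherited from the interpretation of $\mathsf{T}$.
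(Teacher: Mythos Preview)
Your proposal is correct and follows essentially the same approach as the paper: the theorem in the paper is stated without a separate proof, the argument being entirely contained in the preceding Construction subsection, and you have accurately unpacked how $(\folast_1)^{\star}$, $(\folast_2)^{\star}$, $(\folast_3)^{\star}$ follow from $\emptyset^{\star}$, clause (A), and clause (B) respectively, together with downward closure of $\mathcal{W}^{\star}$ under $\sqsubseteq$ and closure of $K$ under $+$. Your case split on whether $g \in^{\star} f$ in the adjunction argument, and your observation that the genuine work lies in the $\mathrm{S}$-closure of $K_0$, are both exactly right.
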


\end{document}